\documentclass[12pt]{amsart}
\usepackage{amsfonts,amssymb,amsthm,amsmath,graphicx, hyperref}
\oddsidemargin = -0in \evensidemargin = 0in \textwidth =6.5in
\textheight=9.5in \topmargin=-.5in
\newtheorem{thm}{Theorem}[section]
\newtheorem{cor}[thm]{Corollary}
\newtheorem{lemma}[thm]{Lemma}
\newtheorem{proposition}[thm]{Proposition}

\newcommand{\Z}{\mathbb{Z}}
\newcommand{\SL}{{\text {\rm SL}}}
\newcommand{\abs}[1]{\left|#1\right|}
\newcommand{\sm}[4]{\left(\begin{smallmatrix} #1 & #2 \\ #3 & #4\end{smallmatrix}\right)}

\begin{document}

\title{Zeros of modular forms of half integral weight}
\author{Amanda Folsom}
\author{Paul Jenkins}
\subjclass[2010]{11F37, 11F30}

\begin{abstract}
We study canonical bases for spaces of weakly holomorphic modular forms of level 4 and weights in $\mathbb{Z}+\frac{1}{2}$ and show that almost all modular forms in these bases have the property that many of their zeros in a fundamental domain for $\Gamma_0(4)$ lie on a lower boundary arc of the fundamental domain.  Additionally, we show that at many places on this arc, the generating function for Hurwitz class numbers is equal to a particular mock modular Poincar\'{e} series, and show that for positive weights, a particular set of Fourier coefficients of cusp forms in this canonical basis cannot simultaneously vanish.
\end{abstract}
\maketitle

\section{Introduction}
In studying functions of a complex variable, a natural problem is to determine the locations of the zeros of the functions.  There are a number of recent interesting results on the zeros of modular forms.  For instance, the zeros of Hecke eigenforms of integer weight $k$ become equidistributed in the fundamental domain as $k \rightarrow \infty$ (see~\cite{HS} and~\cite{Ru}), yet such forms still have many zeros on the boundary and center line of the fundamental domain~\cite{GS, LMR}.

Duke and the second author~\cite{DJ1} studied zero locations for a canonical basis $\{f_{k, m}(\tau)\}$ for spaces of integer weight weakly holomorphic modular forms for $\SL_2(\Z)$.  If the weight $k \in 2\Z$ is written as $12\ell + k'$ with $k' \in \{0, 4, 6, 8, 10, 14\}$, then the basis elements have Fourier expansions of the form $f_{k, m}(\tau) = q^{-m} + O(q^{\ell+1})$, where $q = e^{2\pi i \tau}$ as usual.  If $m \geq |\ell|-\ell$, then all of the zeros of $f_{k, m}(\tau)$ in the standard fundamental domain for $\SL_2(\Z)$ lie on the unit circle.  These results were extended~\cite{GJ, HJ} to similar canonical bases $\{f_{k, m}^{(N)}(\tau)\}$ for the spaces $M_k^{\sharp}(N)$ of weakly holomorphic modular forms of integer weight $k$ and level $N = 2, 3, 4$ with poles only at the cusp at $\infty$, showing that many of the zeros of the basis element $f_{k, m}^{(N)}(\tau)$ lie on an appropriate arc if $m$ is large enough.

For spaces of modular forms with weight $k \in \Z+\frac{1}{2}$, canonical bases with similar Fourier expansions exist.  Zagier~\cite{Z} defined such bases for spaces of weakly holomorphic modular forms of level $4$ and weights $\frac{1}{2}$ and $\frac{3}{2}$ satisfying Kohnen's plus space condition, and analogous bases were shown in~\cite{DJ2} to exist in level $4$ for all weights $k \in \Z+\frac{1}{2}$.  In this paper, we address the natural question of whether the zeros of the modular forms in such a basis lie on an arc.

Let $\vartheta(\tau)$ be the theta function $\vartheta(\tau) = \sum_{n\in \mathbb Z} q^{n^2}$.  For $k = s + \frac{1}{2}$ with $s \in \Z$, let $M_k^!$ denote the space of holomorphic functions on the upper half $\mathcal{H}$ of the complex plane which may have poles at the cusps, which transform like $\vartheta^{2k}$ under the action of the group $\Gamma_0(4)$,
and which satisfy Kohnen's plus space condition, so that their Fourier expansion is of the form $\sum_{(-1)^s n \equiv 0, 1 \pmod{4}} a(n) q^n$.  As shown in \cite{DJ2}, the space $M_k^!$ has a canonical basis of modular forms $\{f_{k, m}(\tau)\}$ with Fourier expansion given by \[f_{k, m}(\tau) = q^{-m} + O(q^{N+1}),\] where $N$ depends only on $k$.  Decompose the integer $s$ as $s = 12a + b$, where $a \in \Z$ and \[b \in \{6, 8, 9, 10, 11, 12, 13, 14, 15, 16, 17, 19\}.\]  We show in section~\ref{sec2} that each of these basis elements $f_{k, m}(\tau)$ has exactly $5a + \frac{5m}{4} + \frac{b}{2} - C - \epsilon$ zeros in a fundamental domain for $\Gamma_0(4)$, where $C \in \{\frac{3}{4}, 1, \frac{3}{2}, \frac{7}{4}\}$ depends only on $b$ and the parity of $m$, and where the nonnegative integer $\epsilon$, defined precisely in section~\ref{sec2}, counts the number of initial zeros in a certain sequence of Fourier coefficients of $f_{k, m}$. 

Let $\mathcal{A}$ be the arc in the upper half plane given by \[\mathcal{A} = \left\{ \left. -\frac{1}{4} + \frac{1}{4}e^{i\theta} \  \right| \ 0 < \theta < \pi\right\}.\]
In this paper, we will prove the following theorem about the zeros of the basis elements $f_{k, m}(\tau)$.
\begin{thm}\label{mainthm}
For $k \in \Z+\frac{1}{2}$, assume the notation above.  There are absolute, positive
constants $A$ and $B$ such that if $m \geq A|a| + B$, then at least
$2a + \frac{m}{2}$ of the $5a + \frac{5m}{4} + \frac{b}{2} - C - \epsilon$ zeros of $f_{k, m}$ in a fundamental domain for $\Gamma_0(4)$ lie on the arc $\mathcal{A}$.  Additionally, $A \leq 9$ and $B \leq 109$.
\end{thm}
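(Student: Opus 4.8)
The plan is to adapt to the half-integral weight setting the method used by Duke and the second author \cite{DJ1} for level one and extended to level $N$ in \cite{GJ, HJ}. The underlying principle is that along $\mathcal{A}$ a suitably normalized version of $f_{k,m}$ is real-valued and oscillates rapidly, so that sign changes detected by the intermediate value theorem force zeros onto the arc. The first step is to make the real-valuedness precise. The arc $\mathcal{A}$ is the fixed-point set of the anti-holomorphic reflection $\tau \mapsto -\bar\tau/(4\bar\tau+1)$ in the circle $\abs{\tau + \tfrac14} = \tfrac14$, i.e.\ the geodesic in $\mathcal{H}$ joining the cusps $0$ (as $\theta \to 0$) and $-\tfrac12$ (as $\theta \to \pi$) of $\Gamma_0(4)$. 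Using the transformation law of $f_{k,m}$ under the normalizing involution $\sm{-1}{0}{4}{1}$ together with complex conjugation, and tracking the theta-multiplier of $\vartheta^{2k}$ carefully, I expect to produce a unimodular automorphy factor $\rho_k(\theta)$ so that $F_{k,m}(\theta) := \rho_k(\theta)\, f_{k,m}\!\left(-\tfrac14 + \tfrac14 e^{i\theta}\right)$ is real for $0 < \theta < \pi$. Verifying that the half-integral weight multiplier contributes only a real factor on $\mathcal{A}$ is the first technical point, since this automorphy factor is genuinely more delicate than in the integer weight case.

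Next I would isolate the main term. Parametrizing $\tau = -\tfrac14 + \tfrac14 e^{i\theta}$ gives $q^{-m} = e^{(\pi m/2)\sin\theta}\, e^{i(\pi m/2)(1-\cos\theta)}$, so after normalization the contribution of the leading term $q^{-m}$ is a positive amplitude $e^{(\pi m/2)\sin\theta}$ times $\cos\psi(\theta)$, where $\psi(\theta)$ combines the phase $(\pi m/2)(1-\cos\theta)$ with the weight-dependent phase of $\rho_k(\theta)$. The reflective symmetry of $\mathcal{A}$ about its top point $-\tfrac14 + \tfrac i4$ (i.e.\ $\theta \mapsto \pi - \theta$) lets me restrict attention to the sub-arc $0 < \theta \le \tfrac{\pi}{2}$ lying in the fundamental domain, over which $(\pi m/2)(1-\cos\theta)$ sweeps an interval of length $\tfrac{\pi m}{2}$ and the weight phase sweeps a further $\approx 2a\pi$. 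Thus the main term alone changes sign at least $2a + \tfrac m2$ times, which is exactly the number of guaranteed zeros in the theorem; the $\tfrac m2$ comes from the swept argument of $q^{-m}$ and the $2a$ from the weight phase.

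The hard part will be the error estimate. Writing $F_{k,m}(\theta) = e^{(\pi m/2)\sin\theta}\cos\psi(\theta) + E(\theta)$, I must bound the remainder $E(\theta)$, which collects the tail $\sum_{n \ge N+1} a_m(n) q^n$, uniformly on a slightly shrunken sub-arc $\theta \in [\theta_0, \tfrac{\pi}{2}]$. Near $\theta = 0$ one has $\mathrm{Im}\,\tau = \tfrac14\sin\theta \to 0$, so $\abs{q} = e^{-(\pi/2)\sin\theta} \to 1$, the expansion at $\infty$ degenerates, and the main amplitude flattens to $1$; a neighborhood of this endpoint must therefore be excised at the cost of only boundedly many zeros. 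To control $E$ I would need effective bounds on the coefficients $a_m(n)$, obtained either from the two-variable generating function and the duality for the basis $\{f_{k,m}\}$ of \cite{DJ2} via a contour-integral estimate, or from an explicit description of $f_{k,m}$ in terms of a Hauptmodul for $\Gamma_0(4)$ and the forms of smallest index. Showing that $\abs{E(\theta)}$ is strictly smaller than the amplitude at each crossing of $\cos\psi$ is precisely where the hypothesis $m \ge A\abs{a} + B$ enters: it forces the exponential gain $e^{(\pi m/2)\sin\theta_0}$ of the main term to dominate the coefficient growth.

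Finally, once $\abs{E(\theta)}$ is below the amplitude at the crossing points of $\cos\psi$, the intermediate value theorem produces a genuine zero of $f_{k,m}$ between consecutive crossings; since these lie in the interior of the sub-arc they are distinct and furnish at least $2a + \tfrac m2$ of the total $5a + \tfrac{5m}{4} + \tfrac b2 - C - \epsilon$ zeros. The explicit constants $A \le 9$ and $B \le 109$ would then be extracted by carefully tracking the implied constants through the coefficient bound, the choice of $\theta_0$, and the endpoint truncation. I expect this bookkeeping, rather than any single conceptual step, to be the main labor, with the delicate handling of the half-integral weight multiplier in the real-valuedness step running a close second.
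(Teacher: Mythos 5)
Your high-level skeleton (normalize $f_{k,m}$ to be real-valued on $\mathcal{A}$, exhibit an oscillating main term, bound an error, apply the intermediate value theorem) is exactly the paper's, but the step on which everything hinges — how the main term is isolated and the error controlled — is a route that fails. You decompose the normalized form as (leading term $q^{-m}$) plus (Fourier tail $\sum_{n>N} a_k(m,n)q^n$) and propose to beat the tail term-by-term using effective coefficient bounds. This cannot work: the coefficients $a_k(m,n)$ genuinely grow like $e^{\pi\sqrt{mn}}$ (for weight $\frac12$ they are essentially traces of singular moduli), while on $\mathcal{A}$ one has $\abs{q^n} = e^{-\frac{\pi n}{2}\sin\theta}$. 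The single largest term of the tail, occurring near $n \approx m/\sin^2\theta$, already has size about $e^{\frac{\pi m}{2\sin\theta}}$, which equals the main amplitude $e^{\frac{\pi m}{2}\sin\theta}$ at $\theta = \frac{\pi}{2}$ and exceeds it exponentially for every other $\theta$ on the arc. So no coefficient bound, however sharp, can make the term-by-term tail smaller than the main term; the tail is small only because of massive cancellation. The paper's mechanism for capturing that cancellation is to run the contour argument on the function itself, not on its coefficients: writing $f_{k,m}(z)$ as a $q$-integral of the two-variable generating function $B_k(z;\tau)$ and pushing the contour down to a fixed low height $v$, one picks up residues at the eight or twelve points in the strip that are $\Gamma_0(4)$-equivalent to $z$. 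Crucially, the main term $2\cos\left(\frac{k\theta}{2}+\frac{\pi m}{2}-\frac{\pi m\cos\theta}{2}\right)$ is the sum of \emph{two} residues (at $\tau \equiv z$ and at $\tau \equiv z/(4z+1)$), further residues produce the non-negligible correction terms $\mathcal{C}_{m,k}$, $\mathcal{D}_{m,k}$ (which your decomposition never sees, and which force the hypothesis $m \gg \abs{a}$), and the leftover integral at height $v$ is bounded trivially by $e^{2\pi m v}$ times a constant, with $2\pi v < \frac{\pi}{2}\sin\theta$ on the working arc. This is where the explicit $A \le 9$, $B \le 109$ come from.

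There is a second, quantitative gap in your counting. The reflection $\theta \mapsto \pi-\theta$ is realized by $\sm{1}{0}{4}{1} \in \Gamma_0(4)$, which maps $\mathcal{A}$ to the \emph{other} boundary arc centered at $+\frac14$, not to itself, so zeros of $f_{k,m}$ on $\mathcal{A}$ do not come in pairs $\theta, \pi-\theta$ and you cannot "restrict to $0<\theta\le\frac\pi2$ by symmetry." Worse, excising a fixed neighborhood $(0,\theta_0)$ of the cusp costs about $\frac{m}{2}(1-\cos\theta_0)$ sign changes — a positive proportion of $m$, not "boundedly many" — so the half-arc with an excision cannot yield $2a+\frac{m}{2}$ crossings. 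The paper avoids both problems by counting on the arc $\theta \in \left[\frac{\pi}{3},\frac{2\pi}{3}\right]$, symmetric about the top point and uniformly far from both cusps, where the phase $h_{k,m}$ sweeps an interval of length $\pi\left(\frac{k}{6}+\frac{m}{2}\right)$ and hence gives $2a+\frac{m}{2}+1$ alternations with no excision at all.
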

The proof of the theorem uses contour integration on a generating function for the canonical basis elements $f_{k, m}$ to approximate $f_{k, m}$ by a real-valued trigonometric function on $\mathcal{A}$.  We note that for specific values of $k$, the bounds on $A$ and $B$ and the quantity $2a+\frac{m}{2}$ can often be improved; for instance, if $a \geq 0$ we may take $A = 0$.

In the case of weight $k=\frac{3}{2}$, Theorem~\ref{mainthm} yields an interesting corollary pertaining to weak Maass forms and mock modular forms.  Loosely speaking, a weak Maass form is a smooth complex function defined on the upper half of the complex plane which transforms like a modular form, but is not necessarily holomorphic.  Such a function must also be annihilated by a Laplacian operator, and satisfy suitable growth conditions in the cusps.  From the definition, it follows that a weak Maass form naturally decomposes into two parts, a ``holomorphic part,'' and a ``non-holomorphic'' part;  the holomorphic parts are commonly referred to as mock modular forms.  (See \cite{BF} or \cite{OnoCDM}, for example.)  A first example of a half-integral weight weak Maass form arises from the function \[\vartheta(\tau)^3 = 1 + 6\tau + 12 q^2 + 8 q^3 + 6 q^4 + 24 q^5 + 24 q^6 + 12 q^8 + \cdots.\]
If we define its Fourier coefficients by $\vartheta(\tau)^3 =: \sum_{n\geq 0} h(n) q^n$, then it was proved by Gauss that \[h(n) = \begin{cases} 12 H(4n), & n\equiv 1,2 \pmod{4}, \\ 24 H(n), & n\equiv 3 \pmod{8}, \\ h\left(\frac{n}{4}\right), & n\equiv 0 \pmod{4}, \\ 0, & n\equiv 7 \pmod{8}, \end{cases}\] where the values $H(n)$ are the Hurwitz class numbers.
Let $\mathcal G(\tau) := -\frac{1}{12} + \sum_{n\geq 0} H(n) q^n$ be the generating function for the Hurwitz class numbers.  Zagier \cite{ZagierEis} beautifully showed that $\mathcal G $ is in fact the holomorphic part of a  weak Maass form.    More precisely, Zagier showed that the function
\[G(\tau) := \mathcal G(\tau) + \frac{1}{8\sqrt{\pi}} \sum_{n\in\mathbb Z} n \Gamma\left(-\frac12,4\pi n^2 \text{Im}(\tau)\right)q^{-n^2}\] is a weight $\frac{3}{2}$ weak Maass form of moderate growth on $\Gamma_0(4)$.  Here, $\Gamma(\cdot,\cdot)$ denotes the incomplete gamma function.

By work of Bruinier, the second author, and Ono~\cite{BJO}, it turns out that Zagier's weak Maass form $G$ is naturally related to the basis functions $f_{\frac{3}{2},m}$ studied here, for any positive integer $m\equiv 0,1 \pmod{4}$ which is a square.  Precisely, for such $m$, the authors show that
\[f_{\frac{3}{2},m} = 24 G + F_{-m,\frac32}^+ ,\]
where the functions \[F_{-m,k}^+(\tau) := \tfrac32 F_{-m}(\tau,\tfrac{k}{2}) |_k \ \text{pr}\] are the weight $k$ and index $m$ Poincar\'e series defined in \cite[(3.13)]{BJO}, which are weight $k$ weak Maass forms of level $4$.
Denote the holomorphic part of $F_{-m,\frac32}^+ $ by $ \mathcal{F}_{-m,\frac32}^+$.  Because $f_{\frac{3}{2}, m}$ is weakly holomorphic, we must have that \[f_{\frac{3}{2},m} = 24 \mathcal G + \mathcal{F}_{-m,\frac32}^+ .\] That is, the basis elements $f_{\frac{3}{2},m} $ may be expressed as the sum of two mock modular forms, one of which is the generating function $24\mathcal G $ for Hurwitz class numbers, and the other of which is the holomorphic part of the Poincar\'e series $F_{-m,\frac32}^+ $.  By Theorem~\ref{mainthm},  the basis functions $f_{\frac32, m} $ have many zeros on $\mathcal{A}$;  thus, it must be the case that the mock modular forms given by $24\mathcal G $ and $\mathcal{F}_{-m,\frac32}^+ $ take on equal and opposite values at many points on $\mathcal{A}$.  Precisely, we have the following theorem.

\begin{thm}
Given the above conditions on $m$, there are at least
$\frac{m}{2} - 1$ points $\tau_j^{(m)}$ on the arc $\mathcal{A}$ at which the mock modular generating function for Hurwitz class numbers $\mathcal G$ and the mock modular Poincar\'e series $\frac{-1}{24}\mathcal{F}_{-m,\frac32}^+$ take on equal values.  That is, for such points $\tau_j^{(m)}$, we have that \[\mathcal G(\tau_j^{(m)}) = \tfrac{-1}{24} \mathcal{F}_{-m,\frac32}^+(\tau_j^{(m)}).\]
\end{thm}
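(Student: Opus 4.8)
The plan is to deduce this statement as a direct corollary of Theorem~\ref{mainthm}, using the decomposition $f_{\frac{3}{2},m} = 24\mathcal{G} + \mathcal{F}_{-m,\frac{3}{2}}^+$ of holomorphic parts recalled just above. The first step is purely formal: at any point $\tau$ in the upper half plane where $f_{\frac{3}{2},m}(\tau) = 0$, the identity of mock modular forms gives $24\mathcal{G}(\tau) + \mathcal{F}_{-m,\frac{3}{2}}^+(\tau) = 0$, which rearranges to exactly
\[
\mathcal{G}(\tau) = -\frac{1}{24}\,\mathcal{F}_{-m,\frac{3}{2}}^+(\tau).
\]
Since $f_{\frac{3}{2},m}$ is weakly holomorphic and hence holomorphic throughout $\mathcal{H}$ (its poles lie only at the cusps, i.e.\ at the excluded endpoints of the open arc $\mathcal{A}$), every zero of $f_{\frac{3}{2},m}$ lying on $\mathcal{A}$ automatically produces a point at which the two mock modular forms agree. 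The problem therefore reduces to counting the zeros of $f_{\frac{3}{2},m}$ on $\mathcal{A}$.

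Next I would specialize the zero count to the weight $k = \frac{3}{2}$. Here $s = 1$, and the required decomposition $s = 12a + b$ with $b$ in the prescribed set $\{6, 8, 9, \ldots, 19\}$ forces $a = -1$ and $b = 13$. Applying Theorem~\ref{mainthm}, a number of zeros that is, in the sharpened form available for this particular weight (as noted in the remark following Theorem~\ref{mainthm}), at least $\frac{m}{2} - 1$ of the zeros of $f_{\frac{3}{2},m}$ in a fundamental domain lie on $\mathcal{A}$. Because these arc zeros arise from sign changes of the real-valued trigonometric approximation underlying the proof of Theorem~\ref{mainthm}, they are simple and occur at distinct points, so they yield at least $\frac{m}{2}-1$ distinct points $\tau_j^{(m)} \in \mathcal{A}$, each satisfying the asserted equality.

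The main thing to handle carefully is the bookkeeping in the zero count rather than any conceptual obstacle: the generic bound $2a + \frac{m}{2} = \frac{m}{2} - 2$ falls one short of the claimed $\frac{m}{2} - 1$, so the argument must invoke the refinement of the Section~\ref{sec2} analysis tailored to $k = \frac{3}{2}$ instead of the blanket estimate. One must also confirm that the standing hypotheses on $m$ (that $m$ be a positive square with $m \equiv 0, 1 \pmod{4}$, the exact regime in which the decomposition of \cite{BJO} is valid) are compatible with the threshold $m \geq A|a| + B$ needed to apply Theorem~\ref{mainthm} with $a = -1$. Once the refined count and this compatibility are in place, the equality of $\mathcal{G}$ and $-\frac{1}{24}\mathcal{F}_{-m,\frac{3}{2}}^+$ at the $\tau_j^{(m)}$ is immediate from the first paragraph, and the theorem follows.
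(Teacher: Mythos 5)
Your proposal is correct and follows essentially the same route as the paper: the authors likewise combine the decomposition $f_{\frac{3}{2},m} = 24\mathcal{G} + \mathcal{F}_{-m,\frac{3}{2}}^+$ from \cite{BJO} with the zero count of Theorem~\ref{mainthm}, sharpened for $k=\frac{3}{2}$ (so $a=-1$, $b=13$) by the one extra zero coming from the count of sign-change points (the constant $c$ in Table~\ref{table1}), to produce $\frac{m}{2}-1$ distinct zeros of $f_{\frac{3}{2},m}$ on $\mathcal{A}$, each of which is a point of equality of the two mock modular forms. The only slip is cosmetic: the refinement supplying the extra zero comes from the Section~\ref{sec3} analysis (the points where the trigonometric approximation is $\pm 2$), not from Section~\ref{sec2}, and the distinctness of the zeros follows from the Intermediate Value Theorem argument there; no claim of simplicity is needed.
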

We note that a direct application of Theorem~\ref{mainthm} gives $\frac{m}{2} -2$ zeros of the form $f_{\frac{3}{2}, m}$; however, the proof of Theorem~\ref{mainthm} in section~\ref{sec3} shows that for the weight $k=\frac{3}{2}$, with $b=13$, there is at least one extra zero.

Our results also lead to an interesting corollary in the case of weight $\frac32$ for any positive integer $m\equiv 0,1 \pmod{4}$ which is not a square.  In this case, it turns out that Theorem~\ref{mainthm} provides information about the zeros of the Poincar\'e series $F_{-m,\frac32}^+$ themselves.  Previously, Rankin~\cite{Ra1} addressed the problem of understanding the number of zeros of general Poincar\'e series of level 1 and even integer weight at least $4$, and gave an explicit bound on the number of zeros lying on the intersection of the standard fundamental domain with the boundary of the unit disk. Our work leads to  results analogous to those of Rankin in the case of weight $\frac32$, by virtue of the fact that for positive integers $m\equiv 0,1\pmod{4}$ such that $m$ is not a square, we have from~\cite{BJO} that $f_{\frac{3}{2},m} =  F_{-m,\frac32}^+ $.  Thus, we have the following Corollary to Theorem~\ref{mainthm}, giving locations for the zeros of certain weight $\frac{3}{2}$ Poincar\'e series.  The number $C_m$ is equal to $\frac{3}{2}$ or $\frac{3}{4}$, depending on whether $m$ is even or odd.

\begin{cor} \label{corollary13} Assume the notation above, and let $m\equiv 0,1 \pmod{4}$ such that $m$ is not a square.   There is an absolute  positive constant $A$ such that if $m \geq A$, then at least
$\frac{m}{2}$ of the $\frac{5m}{4}+\frac32  - C_m - \epsilon$ zeros of the Poincar\'e series $F_{-m,\frac32}^+ $ in a fundamental domain for $\Gamma_0(4)$ lie on the arc $\mathcal{A}$.  Additionally, $A \leq 111$.
\end{cor}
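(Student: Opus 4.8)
The plan is to obtain Corollary~\ref{corollary13} as a direct specialization of Theorem~\ref{mainthm} to weight $k=\frac32$, transported from $f_{\frac32,m}$ to the Poincar\'e series by the identity $f_{\frac{3}{2},m} = F_{-m,\frac32}^+$, which holds for non-square $m\equiv 0,1 \pmod 4$ by~\cite{BJO}. First I would fix the parameters. Writing $k=s+\frac12$ forces $s=1$, and among the admissible decompositions $s=12a+b$ with $b\in\{6,8,9,10,11,12,13,14,15,16,17,19\}$ the only solution is $a=-1$, $b=13$. Substituting these into the two expressions of Theorem~\ref{mainthm} gives total zero count $5a+\frac{5m}{4}+\frac{b}{2}-C-\epsilon=\frac{5m}{4}+\frac32-C-\epsilon$, which matches the statement with $C=C_m$ (for $b=13$ the two values of $C$ allowed by the parity of $m$ are exactly $\frac32$ for even $m$ and $\frac34$ for odd $m$), and arc count $2a+\frac{m}{2}=\frac{m}{2}-2$. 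Since $|a|=1$, the hypothesis $m\ge A|a|+B$ becomes $m\ge A+B\le 118$. Because $f_{\frac32,m}$ equals the Poincar\'e series identically in this case, every zero of $f_{\frac32,m}$ is a zero of $F_{-m,\frac32}^+$, so the black-box application already delivers $\frac{m}{2}-2$ zeros of $F_{-m,\frac32}^+$ on $\mathcal A$ for $m\ge 118$.

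It then remains to upgrade the count from $\frac{m}{2}-2$ to $\frac{m}{2}$ and to sharpen the threshold from $118$ to $A\le 111$; for this I would reopen the argument of section~\ref{sec3} rather than invoke Theorem~\ref{mainthm} as a black box. The proof there approximates $f_{k,m}$ on $\mathcal A$ by a real-valued trigonometric function, obtained by contour integration of the generating function for the $f_{k,m}$, and counts sign changes on a subinterval of $(0,\pi)$ where the integral error is provably controlled; the generic count $2a+\frac{m}{2}$ is what that subinterval guarantees. For the single weight $k=\frac32$ I would recompute the error terms directly with $a=-1$ held fixed, which removes the slack absorbed into the uniform constants $A$ and $B$ and should yield the admissibility $m\ge 111$. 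The two extra zeros I would extract by pushing the sign-change count toward the endpoints of $\mathcal A$, namely the cusps $\tau=0$ and $\tau=-\frac12$, in the same spirit as the remark preceding this corollary, where one additional zero is found for $b=13$ in the square case; here the non-square identity $f_{\frac32,m}=F_{-m,\frac32}^+$ eliminates the $24\mathcal G$ summand that is present in the square case, which I expect is precisely what allows both boundary oscillations to be counted rather than only one.

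The main obstacle is the endpoint analysis that produces these extra two zeros. As $\theta\to 0^+$ and $\theta\to\pi^-$ the trigonometric approximation degrades and the error estimate from the contour integral weakens, so one must show both that the approximating function genuinely changes sign near each endpoint and that the error is small enough not to cancel those sign changes. Pinning down the sharp constant $111$ is then a matter of bounding the tail of the generating-function contour integral explicitly for $a=-1$; this is routine in principle but must be carried out with the specific weight substituted in, since the point is to beat the generic bound $118$.
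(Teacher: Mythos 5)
Your first paragraph follows the paper's route exactly: Corollary~\ref{corollary13} is obtained by specializing the proof of Theorem~\ref{mainthm} to $k=\frac32$ (so $a=-1$, $b=13$, and $C=C_m$ as you verify) via the identity $f_{\frac32,m}=F_{-m,\frac32}^+$ from~\cite{BJO}, and the threshold $A\le 111$ is indeed obtained by rerunning the estimates of section~\ref{secerrorbound} with $a=-1$ held fixed and with the $b=13$ constants from Table~\ref{boundtable} ($288599577$ and $5296681421$) in place of the $b=6$ ones; for instance $(0.83353)^m\cdot 3.6734\cdot 288599577\le 2$ forces $m\ge 111$, while the second case only requires $m\ge 55$. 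Up to that point your proposal and the paper agree.

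The gap is in your mechanism for upgrading the count from $\frac m2-2$. The paper's extra zeros do not come from the ends of $\mathcal A$ at all: they come from the sharpened oscillation count in Table~\ref{table1}, i.e.\ from counting integers in $\left[\frac k6-\frac m4,\frac k3+\frac m4\right]$ (resp.\ odd integers in $\left[\frac k3-\frac m2,\frac{2k}3+\frac m2\right]$) for the specific value $b=13$, which yields $c=2$ ($m$ even) or $c=3$ ($m$ odd) extremal points beyond $2a+\lfloor\frac m2\rfloor$ instead of the generic $c=1$ --- a gain occurring strictly inside $\theta\in\left(\frac\pi3,\frac{2\pi}3\right)$, and the same source as the ``one extra zero'' remark after Theorem 1.2 that you cite. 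Your plan to harvest additional sign changes near the cusp ends $\theta\to 0^+$ and $\theta\to\pi^-$ cannot be executed with the paper's tools: Theorem~\ref{thm_fintegral} is proved only for $\theta\in\left(\frac\pi3,\frac{2\pi}3\right)$; outside that range further poles of $B_k(z;\tau)$ enter $W_{\epsilon,v}$, beyond the sets $\mathcal M_1,\dots,\mathcal M_4$, and the correction term $\mathcal C_{m,k}(\theta)$ of~(\ref{def_Cmk}) contains the factor $e^{\frac{\pi m}2\left(\frac1{2\tan(\theta/2)}-\sin\theta\right)}$, which blows up exponentially in $m$ as $\theta\to 0^+$, so the trigonometric approximation is not merely ``degraded'' near the endpoints, it fails outright. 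Worse, the heuristic you use to justify expecting two extra boundary zeros --- that the non-square identity ``eliminates the $24\mathcal G$ summand'' --- is a misconception: in both Theorem 1.2 and Corollary~\ref{corollary13} the quantity being counted is the number of zeros of the single weakly holomorphic form $f_{\frac32,m}$ on $\mathcal A$, and whether $m$ is a square is completely invisible to the sign-change analysis; the square/non-square dichotomy affects only the interpretation of those zeros (coincidences of two mock modular forms versus zeros of the Poincar\'e series), never their number. So your proposal contains no valid mechanism for closing the gap between $\frac m2-2$ and $\frac m2$, which is precisely the content that distinguishes the corollary from a black-box application of Theorem~\ref{mainthm}.
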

We note again that the bounds appearing here, for $k = \frac{3}{2}$, are better than appear in the general statement of Theorem~\ref{mainthm}.  Details supporting this computation appear in section~\ref{secerrorbound}.

As an additional application, for any weight $k \in \Z+\frac{1}{2}$, Theorem~\ref{mainthm} implies the following non-vanishing theorem for coefficients of modular forms in the canonical basis for $M_{2-k}^!$.
\begin{thm}\label{lehmerthm}
Assume the notation from Theorem~\ref{mainthm}.
Let $m \geq A\abs{a}+B$ with $m \equiv 0, (-1)^{s-1} \pmod{4},$ and let $\{f_{2-k, i}(\tau)\}$ be the canonical basis for the space $M_{2-k}^!$.  For any integer
$M > 3a + \frac{3m}{4} + \frac{b}{2} - C$, it is impossible for the Fourier coefficients of $q^m$ in each of the first $M$ basis elements $f_{2-k, i}(\tau)$ with $i \not\equiv m \pmod{4}$ to simultaneously vanish.
\end{thm}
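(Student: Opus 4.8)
The plan is to convert the statement about vanishing Fourier coefficients of the dual-weight basis into a statement about the zeros of $f_{k,m}$, and then to play the resulting count of vanishing coefficients against the zero count coming from Theorem~\ref{mainthm} and the valence formula of section~\ref{sec2}. The bridge between weights $k$ and $2-k$ is the standard duality for these canonical bases: writing $f_{k,m} = q^{-m} + \sum_n a_k(m,n)q^n$ and $f_{2-k,i} = q^{-i} + \sum_r a_{2-k}(i,r)q^r$, one has
\[ a_{2-k}(i,m) = -a_k(m,i). \]
I would cite this from~\cite{DJ2}, or prove it directly by noting that $f_{k,m}f_{2-k,i}$ transforms like $\vartheta^4$, hence is a weight $2$ weakly holomorphic form on $\Gamma_0(4)$, so that $f_{k,m}f_{2-k,i}\,d\tau$ is a meromorphic differential whose residues sum to zero; reading off the coefficient of $q^0$ in the product yields the identity. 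The support conditions make the pairing consistent: the admissible pole orders $i$ of the dual basis $f_{2-k,i}$ coincide with the admissible coefficient indices of $f_{k,m}$, and the hypothesis $m \equiv 0, (-1)^{s-1} \pmod 4$ is exactly the condition that $m$ lie in the support of the weight $2-k$ plus space, so that $a_{2-k}(i,m)$ is a genuine free coefficient.

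With this identification, the simultaneous vanishing of the coefficient of $q^m$ in each of the first $M$ basis elements $f_{2-k,i}$ with $i \not\equiv m \pmod 4$ becomes equivalent to the vanishing of $a_k(m,i)$ for the smallest $M$ admissible indices $i$ satisfying $i \not\equiv m \pmod 4$. The purpose of the restriction $i \not\equiv m \pmod 4$ is to single out precisely the sequence of Fourier coefficients of $f_{k,m}$ whose initial vanishing is measured by the integer $\epsilon$ of section~\ref{sec2} (the exclusion also removes the self-dual index). Consequently, if all $M$ of these coefficients vanish, then by the very definition of $\epsilon$ we obtain $\epsilon \geq M$.

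The contradiction then comes from counting zeros. By the valence formula of section~\ref{sec2}, $f_{k,m}$ has exactly $5a + \frac{5m}{4} + \frac{b}{2} - C - \epsilon$ zeros in a fundamental domain for $\Gamma_0(4)$, and since $m \geq A|a|+B$, Theorem~\ref{mainthm} guarantees that at least $2a + \frac{m}{2}$ of them lie on $\mathcal{A}$. As the number of zeros lying off the arc is nonnegative, this forces
\[ 5a + \tfrac{5m}{4} + \tfrac{b}{2} - C - \epsilon \;\geq\; 2a + \tfrac{m}{2}, \]
i.e. $\epsilon \leq 3a + \frac{3m}{4} + \frac{b}{2} - C$. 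Combining this with $\epsilon \geq M$ gives $M \leq 3a + \frac{3m}{4} + \frac{b}{2} - C$, directly contradicting the hypothesis $M > 3a + \frac{3m}{4} + \frac{b}{2} - C$; hence the $M$ coefficients cannot all vanish.

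I expect the main obstacle to be the bookkeeping in the second paragraph: one must verify that the dual indices $i$ with $i \not\equiv m \pmod 4$ match up, in the correct order and term by term, with the exact sequence of coefficients defining $\epsilon$ in section~\ref{sec2}, so that $\epsilon \geq M$ holds on the nose rather than only up to an additive constant. Tracking the interaction of the two plus-space support conditions modulo $4$, together with the removal of the self-dual index $i \equiv m$, is the delicate point; once this alignment is confirmed, the rest of the argument follows formally from the duality and the previously established zero count.
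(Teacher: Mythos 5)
Your proposal is correct and follows essentially the same route as the paper: the paper likewise combines the exact zero count $5a + \frac{5m}{4} + \frac{b}{2} - C - \epsilon$ from the valence formula with the lower bound $2a + \frac{m}{2}$ of Theorem~\ref{mainthm} to deduce $\epsilon \leq 3a + \frac{3m}{4} + \frac{b}{2} - C$, and then invokes the Zagier duality $a_k(m,n) = -a_{2-k}(n,m)$ from~\cite{DJ2} to translate this bound on $\epsilon$ into the non-vanishing statement. Your mod-$4$ bookkeeping (that the indices $i \not\equiv m \pmod 4$ are exactly the sequence whose initial vanishing defines $\epsilon$) is the right verification and matches the paper's definition of $\epsilon$ in section~\ref{sec2}.
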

If $2-k$ is positive and large enough, many of the basis elements in Theorem~\ref{lehmerthm} are actually cusp forms.  An analogous result in integer weights would resemble a weaker form of Lehmer's conjecture on the nonvanishing of the coefficients of the cusp form $\Delta(\tau)$, since $\Delta$ is the first canonical basis element $f_{12, -1}$ of weight 12.  See also~\cite{ChenWu} for additional results on the nonvanishing of coefficients of cusp forms of half integral weight.

In section~\ref{sec2} of this paper, we give definitions, notation, and the proof of Theorem~\ref{lehmerthm}.  The main argument in the proof of Theorem~\ref{mainthm} appears in section~\ref{sec3}, with associated computations appearing in sections~\ref{sec_akm},~\ref{pfsec2}, and~\ref{secerrorbound}.

\section{Definitions}\label{sec2}

As before, let $k = s + \frac{1}{2}$ be half integral, and write $s = 12a + b = 6\ell + \frac{k'}{2}$ with $a, \ell \in \Z,$ and \[b \in \{6, 8, 9, 10, 11, 12, 13, 14, 15, 16, 17, 19\}\] and $k' \in \{0, 4, 6, 8, 10, 14\}$.  The basis elements $f_{k, m}(\tau)$ for the space $M_k^!$ of weakly holomorphic modular forms of weight $k$ and level $4$ satisfying Kohnen's plus space condition are constructed explicitly in~\cite{DJ2} in terms of
the weight $2$, level $4$ Eisenstein series given by \[F(\tau) = \sum_{n=0}^\infty \sigma(2n+1)q^{2n+1},\] the weight $12$, level $1$ cusp form $\Delta(\tau)$, and
the theta function $\vartheta(\tau) = \sum_{n \in \Z} q^{n^2} \in M_{\frac{1}{2}}^!$, which transforms under $\gamma = \sm{a}{b}{c}{d} \in \Gamma_0(4)$ with character $ \rho = \rho_\gamma := \left(\frac{c}{d}\right)\varepsilon_d^{-1} ,$ where $\varepsilon_d$ is equal to $1$ or $i$, depending on whether $d$ is congruent to $1$ or $3$ $\pmod 4$.
The basis elements have a Fourier expansion given by \[f_{k, m}(\tau) = q^{-m} + \mathop{\sum_{n > N}}_{(-1)^s n \equiv 0, 1\pmod{4}}a_k(m, n) q^n.\] Here $N$ is equal to $2\ell$ if $\ell$ is even and $2\ell - (-1)^s$ if $\ell$ is odd, and $m \geq -N$ satisfies $(-1)^{s-1}m \equiv 0, 1\pmod{4}$.  All of the coefficients $a_k(m, n)$ are integers.

Any fundamental domain for $\Gamma_0(4)$ has three cusps, which we take to be at $\infty, 0$, and $\frac{1}{2}$ and which have widths $1$, $4$, $1$ respectively.  We use a fundamental domain $\mathcal{F}$ bordered by the vertical lines with real part $\pm \frac{1}{2}$ and the semicircles defined by $\pm \frac{1}{4} + \frac{1}{4}e^{i\theta}$ for $\theta \in [0, \pi]$.

\begin{figure}[!ht]
\centering
\includegraphics[width = .5\textwidth]{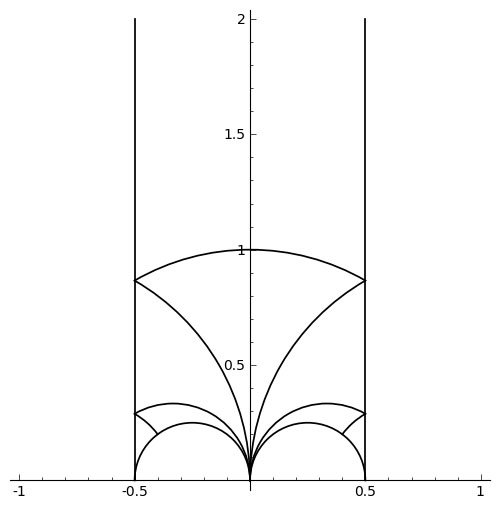}
\caption{Fundamental domain $\mathcal F$ for $\Gamma_0(4)$}
\end{figure}

The valence formula for $\Gamma_0(4)$ (see, for instance,~\cite{Rankin}) tells us that for a modular form $f$ of half integral weight $k$ for $\Gamma_0(4)$, we have \[\sum_{\tau \in \mathcal{F}} \textrm{Ord}_{\Gamma_0(4)}(f, \tau) + \sum_{\zeta \in \{\infty, 0, \frac{1}{2}\}} \textrm{Ord}_{\Gamma_0(4)}(f, \zeta) = \frac{k}{2}.\]  Here, for points $\tau$ in the upper half plane, $\textrm{Ord}_{\Gamma_0(4)}(f, \tau) = \textrm{ord}_{\Gamma_0(4)}(f, \tau)$, the usual order of $f$ as an analytic function at $\tau$.  For the cusps $\zeta \in\{ \infty, 0, \frac{1}{2}\}$,
the quantity $\textrm{Ord}_{\Gamma_0(4)}(f, \zeta)$ is determined by the $q$-expansion at each cusp.  Specifically, $\textrm{Ord}_{\Gamma_0(4)}(f, \infty)$ is the smallest exponent in the $q$-expansion of $f$ at $\infty$, while for the cusp at $0$ we multiply the smallest power of $q$ in the Fourier expansion of $f$ at $0$ by the cusp width 4 to get $\textrm{Ord}_{\Gamma_0(4)}(f, 0)$.  For the cusp at $\frac{1}{2}$ the cusp width is $1$, and we find that $\textrm{Ord}_{\Gamma_0(4)}(f, \frac{1}{2})$ is again the smallest exponent in the $q$-expansion at the cusp, but we note that since $\frac{1}{2}$ is an irregular cusp for $\Gamma_0(4)$ for weights in $\Z+\frac{1}{2}$, this exponent will be in $\Z+\frac{1}{4}$ if $s$ is odd and in $\Z+\frac{3}{4}$ if $s$ is even.  For example, we find that for $\vartheta(\tau) \in M_{\frac{1}{2}}^!$, we have
\[\begin{array}{lcl} \textrm{Ord}_{\Gamma_0(4)}(\vartheta, \infty) &=& 0,  \\
 \textrm{Ord}_{\Gamma_0(4)}(\vartheta, 0) &=& 0,  \\
 \textrm{Ord}_{\Gamma_0(4)}\left(\vartheta, \frac{1}{2}\right) &=& \frac{1}{4}. \end{array}\]

We next want to understand the behavior of the basis element $f_{k, m}(\tau)$ at the cusps at $0$ and $\frac{1}{2}$.  To do so, we use the $N=4$ case of a theorem appearing in~\cite{GreenJenkins} for levels $12, 20, 28, 52$.
Performing explicit computations with the projection operator similar to those in~\cite{Kohnen}, we find that if $g(\tau) \in M_k^!(4)$, then the expansions of $g$ at the cusps at $0$ and $\frac{1}{2}$ are given by
\begin{align*}g^{(0)}(\tau) &= C_1\left(g\left(\frac{\tau-1}{16}\right)+g\left(\frac{\tau+7}{16}\right)\right),\\
g^{(\frac{1}{2})}(\tau) &= C_2\left(g\left(\frac{2\tau+1}{8}\right) - g\left(\frac{2\tau-3}{8}\right)\right),\end{align*} where $C_1$ and $C_2$ are constants.  Thus, if the Fourier expansion of $g(\tau)$ is given by $g(\tau) = \sum a(n) q^n$, we have
\[g^{(0)}(\tau) = C_1 \sum_n \left( a(n) e^{2\pi i n (\frac{\tau-1}{16})} + a(n) e^{2\pi i n (\frac{\tau+7}{16})}\right),\]
and since $\frac{\tau-1}{16}$ and $\frac{\tau+7}{16}$ differ by exactly $\frac{1}{2}$, then the two terms cancel when $n$ is odd.  Since $n \equiv 0, (-1)^s \pmod{4}$, this gives
\[g^{(0)}(\tau) = 2 C_1 \sum_n a(4n) e^{2\pi i n (\frac{\tau-1}{4})}.\]  Similarly, we get cancellation for even exponents $n$ in the expansion of $g^{(\frac{1}{2})}(\tau)$, resulting in the expansion
\[g^{(\frac{1}{2})}(\tau) = 2 C_2 \sum_{n \equiv (-1)^s \pmod{4}} a(n) e^{2\pi i n (\frac{2\tau+1}{8})}.\]

Examining this Fourier expansion, we see that the orders of vanishing of a modular form of half integral weight in the Kohnen plus space at the cusps $0, \frac{1}{2}$ of $\Gamma_0(4)$ are related to the Fourier expansion of the form at the cusp at $\infty$.  Specifically, if \[f(\tau) = \sum_{4n \geq n_0} a(4n) q^{4n} + \sum_{4n+(-1)^s \geq n_1} a(4n + (-1)^s) q^{4n+(-1)^s}\] with $a(n_0), a(n_1) \neq 0$, then
we have
\[\begin{array}{lcl}\textrm{Ord}_{\Gamma_0(4)}(f, \infty) &=& \min\{n_0, n_1\},\\
\\ \textrm{Ord}_{\Gamma_0(4)}(f, 0) &=& \frac{n_0}{16}\cdot 4 = \frac{n_0}{4},\\
\\ \textrm{Ord}_{\Gamma_0(4)}\left(f, \frac{1}{2}\right)& =& \frac{2n_1}{8} = \frac{n_1}{4}.\end{array}\]

For example, let $f_{k, m}(\tau)$ be a basis element for $M_k^!$, where $k = 12a + b + \frac{1}{2}$ and $b=6$.  The Fourier expansion is $f_{k, m}(\tau) = q^{-m} + O(q^{4+4a})$ and has exponents congruent to $0$ or $1 \pmod{4}$, and we note that $f_{k, m}(\tau)$ vanishes to order $-m$ at the cusp at $\infty$.  Thus, if $m$ is even, it follows that $\textrm{Ord}_{\Gamma_0(4)}(f, 0)$ is $-\frac{m}{4}$, while $\textrm{Ord}_{\Gamma_0(4)}(f, \frac{1}{2})$ is at least $\frac{4 + 4a + 1}{4} = a + \frac{5}{4}$, with equality if the coefficient of $q^{4+4a+1}$ is nonzero.  If this coefficient is zero, then the first odd exponent in the Fourier expansion is $4+4a+1+4\epsilon$ for some positive $\epsilon \in \Z$, which means that $\textrm{Ord}_{\Gamma_0(4)}(f, \frac{1}{2})$ is equal to $\frac{4+4a+4\epsilon + 1}{4} = a + \frac{5}{4} + \epsilon$.  If $m$ is odd, then $\textrm{Ord}_{\Gamma_0(4)}(f, \frac{1}{2}) = -\frac{m}{4}$, and $\textrm{Ord}_{\Gamma_0(4)}(f, 0)$ is equal to $1+a+\epsilon$ for some nonnegative integer $\epsilon$, with $\epsilon = 0$ only if the coefficient of $q^{4+4a}$ is nonzero.  Applying the valence formula, we find that there must be $5a + \frac{5m}{4} + \frac{b}{2} - 1 - \epsilon$ zeros in the fundamental domain if $m$ is even, and $5a + \frac{5m}{4} + \frac{b}{2} - \frac{3}{4} - \epsilon$ zeros in the fundamental domain if $m$ is odd.

Writing $k = 12a + b + \frac{1}{2}$ as before, similar computations for each value of $b$ show that $f_{k, m}(\tau)$ has $5a + \frac{5m}{4} + \frac{b}{2} - C - \epsilon$ zeros in $\mathcal{F}$, where the constant $C$ is as follows. \[\begin{array}{lcl} C&=&1 \textrm{ if } m \textrm{ is even and } b \textrm{ is even}; \medskip\\
C&=&\frac{3}{2} \textrm{ if } m \textrm{ is even and } b \textrm{ is odd};\medskip\\
C&=&\frac{3}{4} \textrm{ if } m \textrm{ is odd and } b \in \{6, 8, 9, 10, 11, 13\};\medskip\\
C&=&\frac{7}{4} \textrm{ if } m \textrm{ is odd and } b \in \{12, 14, 15, 16, 17, 19\}.\end{array}\]
Because $(-1)^{s-1} m \equiv 0, 1 \pmod{4}$, it can be checked that the number of zeros in $\mathcal{F}$ is always an integer, as expected.

Examining the Fourier expansion \[f_{k, m}(\tau) = q^{-m} + \mathop{\sum_{n > N}}_{(-1)^s n \equiv 0, 1\pmod{4}}a_k(m, n) q^n,\] we see that the quantity $\epsilon$ counts the number of initial consecutive coefficients which are equal to zero in (depending on the parity of $m$) either the sequence $\{a_k(m, N+4), a_k(m, N+8), a_k(m, N+12), \ldots\}$, or the appropriate sequence $\{a_k(m, N+1), a_k(m, N+5), a_k(m, N+9), \ldots\}$ or $\{a_k(m, N+3), a_k(m, N+7), a_k(m, N+11), \ldots\}$.

Since Theorem~\ref{mainthm} gives lower bounds on the number of zeros in the upper half plane, we can find upper bounds for $\epsilon$.  Specifically, we find that \[\epsilon \leq 3a + \frac{3m}{4} + \frac{b}{2} - C,\] so if $m$ is large enough compared to $\abs{a}$ it is impossible to have more than this many consecutive coefficients of even or odd powers of $q$ (starting from $q^N$) vanishing in the Fourier expansion of $f_{k, m}(\tau)$.  Because the Fourier coefficients $a_k(m, n)$ of the basis elements $f_{k, m}(\tau)$ satisfy the Zagier duality relation \[a_k(m, n) = -a_{2-k}(n, m),\] as proved in~\cite{DJ2}, we may apply this duality to give Theorem~\ref{lehmerthm}.

\section{Proof of Theorem~\ref{mainthm}} \label{sec3}
In this section, we prove Theorem~\ref{mainthm}, relying upon Theorem~\ref{thm_fintegral} and Theorem~\ref{prop_Akconstant}, which will be established in sections~\ref{sec_akm} and~\ref{pfsec2}. We begin with the generating function for the basis elements $f_{k,m}$, which is given by
\begin{align*} B_k(z; \tau) = \sum_m f_{k,m}(z) q^m  = \frac{f_k(z)f_{2-k}^*(\tau) + f_k^*(z)f_{2-k}(\tau)}{j(4\tau)-j(4z)},
\end{align*}
where $q=e^{2\pi i \tau}$.
The weakly holomorphic modular forms $f_k$ and $f_k^*$, defined in~\cite{DJ2}, are the first two elements of the canonical basis for $M_k^!$.

We fix $z$ to be on the lower left arc $\mathcal{A}$ of the fundamental domain $\mathcal F$, and write $z = -\frac{1}{4} +  \frac{e^{i\theta}}{4}$ for some $0 < \theta < \pi$.
By Cauchy's theorem, we have that
\begin{align}\label{eqn_cauchy} f_{k,m}(z) = \frac{1}{2\pi i}\int_{C_\epsilon} \frac{ B_k(z;\tau)}{q^{m+1}} dq =\int_{-\frac{1}{2} + i \epsilon}^{\frac{1}{2} + i \epsilon}   B_k(z;\tau) e(-m\tau) d\tau,\end{align}
where $C_\epsilon$ is a circular contour traversed counterclockwise about the origin, with radius $e^{-2\pi \epsilon}$ for some sufficiently large real number $\epsilon > \frac{1}{4}$.  Consider the contour
$C_v$ from $\tau = -\frac12 + i v$ to $\tau = \frac12 + iv$ for some fixed sufficiently small $v$, with $\epsilon>v>0$, and
let $W_{\epsilon,v}$ be the region bounded above by the horizontal line $-\frac{1}{2} + i\epsilon$ to $\frac{1}{2} + i\epsilon$, below by $C_v$, and on the sides by $-\frac12 + iv$ to $-\frac12+ i \epsilon$, and $\frac12 + i \epsilon$ to $\frac12+ iv$.
We have by the residue theorem and~(\ref{eqn_cauchy})  that
\begin{align}\label{eqn_resfkm}
f_{k,m}(z) = -2\pi i \sum_{w}  \displaystyle  \mathop{\textnormal{Res}}_{\tau = w} \left(B_k(z;\tau)e(-m\tau)\right)+ \int_{-\frac{1}{2} + i v}^{\frac{1}{2} + i v}   B_k(z;\tau) e(-m\tau) d\tau,\end{align}
where the sum above runs through the poles $w$ of the function $ B_k(z;\tau)e(-m\tau)$, when viewed as a function of $\tau$, in
the region $W_{\epsilon, v}$.
We explicitly determine these residue sums in Theorem~\ref{thm_fintegral} for all $z$ with $\theta \in [\frac{\pi}{3}, \frac{2\pi}{3}]$.  Using that result, we conclude that for $z$ along this arc, the (normalized) basis functions $f_{k,m}$ satisfy
\begin{align*} e^{\frac{ik\theta}{2}} e^{-\frac{\pi m}{2} \sin(\theta)} f_{k,m}(z) = 2 &\cos \left(\frac{k\theta}{2} + \frac{\pi m}{2} - \frac{\pi m \cos(\theta)}{2}\right) \\ & + \mathcal E_{m,k}(\theta) + e^{\frac{ik\theta}{2}} e^{-\frac{\pi m}{2} \sin(\theta)}\int_{-\frac12 + iv}^{\frac12 + iv} B_k(z;\tau) e(-m\tau) d\tau.\end{align*} Here the function $\mathcal E_{m,k}$ is equal to either $0, \mathcal C_{m,k}$ or $\mathcal D_{m,k}$, depending on the value of $\theta$; the functions $\mathcal C_{m,k}$ and $\mathcal D_{m,k}$ are defined in~(\ref{def_Cmk}) and~(\ref{def_Dmk}), respectively.

A short calculation, using the addition formula for cosine, reveals that
\begin{align}  \cos&\left(\frac{k\theta}{2} + \frac{\pi m}{2} - \frac{\pi m \cos(\theta)}{2}\right)  \label{cossincases}  = \begin{cases}-\sin\left(h_{k,m}(\theta)\right), & \text{ if } m\equiv 1 \!\!\!\pmod{4} \text{ and } k+\frac{1}{2} \equiv 0 \!\!\!\pmod{2}, \\
 \sin\left(h_{k,m}(\theta)\right), & \text{ if } m\equiv 3 \!\!\!\pmod{4} \text{ and } k+\frac{1}{2} \equiv 1 \!\!\!\pmod{2}, \\
 \cos\left(h_{k,m}(\theta)\right), & \text{ if } m\equiv 0 \!\!\!\pmod{4},\end{cases}
\end{align}
where \[h_{k,m}(\theta) := \frac{k\theta}{2} - \frac{\pi m \cos(\theta)}{2}.\]
The function $\sin(h_{k,m}(\theta))$ oscillates between $\pm 1$ when $h_{k,m}(\theta) = \frac{\pi}{2}(2n+1)$ and $n$ runs through $\mathbb Z$.  Similarly, $\cos(h_{k,m}(\theta))$ oscillates between $\pm 1$ when $h_{k,m}(\theta) = \pi n$ and $n$ runs through $\mathbb Z$.
Looking at the endpoints of the interval containing $\theta$, we have that
\begin{align*}h_{k,m}\left(\frac{ \pi}{3}\right) = \frac{\pi}{2}\left(\frac{k}{3} - \frac{m}{2}\right), \  \
h_{k,m}\left(\frac{2\pi}{3}\right) &=  \pi \left(\frac{k}{3} + \frac{m}{4}\right).
\end{align*}
Counting  odd integers in the interval \[\left[\frac{k}{3}-\frac{m}{2}, \frac{2k}{3} + \frac{m}{2}\right]\] and integers in the interval \[\left[ \frac{k}{6}-\frac{m}{4}, \frac{k}{3} + \frac{m}{4}\right],\] we conclude
that in all three cases of~(\ref{cossincases}) there are at least $2a + \frac{m}{2} + 1$ points on $\mathcal{A}$ with $\frac{\pi}{3} < \theta < \frac{2\pi}{3}$ where \[2 \cos\left(\frac{k\theta}{2} + \frac{\pi m}{2} - \frac{\pi m \cos(\theta)}{2}\right) = \pm 2.\]  More specifically, if $k = 12a + b + \frac{1}{2}$, we find that there are at least $2a + \lfloor \frac{m}{2} \rfloor + c$ points on the arc where the function is $\pm 2$, where $c$ depends on $b$ and the parity of $m$ and is given in Table~\ref{table1}.
\begin{table}
  \centering
  \caption{Values of $c$ for each $b$}\label{table1}
    \begin{tabular}{|r|c|c|c|c|c|c|c|c|c|c|c|c|}
    \hline
    $b$ & 6 & 8 & 9 & 10 & 11 & 12 & 13 & 14 & 15 & 16 & 17 & 19 \\
    \hline
    $c \, (m \textrm{ even})$ & 1 & 1 & 2 & 2 & 2 & 2 & 2 & 2 & 3 & 3 & 3 & 3 \\
    \hline
    $c \, (m \textrm{ odd})$ & 2 & 3 & 2 & 3 & 2 & 3 & 3 & 4 & 3 & 4 & 3 & 4 \\
    \hline
    \end{tabular}
\end{table}

By an argument similar to that in section~4 of~\cite{HJ}, we find that for any modular form $f \in M_k^!$, the quantity $e^{\frac{ik\theta}{2}} f(z)$ is real-valued for any $z \in \mathcal{A}$.  We note that the function $2\cos(\frac{\pi m}{2} + h_{k, m}(\theta))$ is real-valued as well, and prove in Lemma~\ref{cdboundlem} and section~\ref{secerrorbound} that the quantity \[\mathcal E_{m,k}(\theta) + e^{\frac{ik\theta}{2}} e^{-\frac{\pi m}{2} \sin(\theta)}\int_{-\frac12 + iv}^{\frac12 + iv} B_k(z;\tau) e(-m\tau) d\tau\] is bounded in absolute value by $2$.  Since the real-valued normalized modular form must be alternately positive and negative at $2a + \frac{m}{2} + 1$ points along $\mathcal{A}$, we apply the Intermediate Value Theorem to see that it must be zero at $2a + \frac{m}{2}$ distinct points on $\mathcal{A}$, and Theorem~\ref{mainthm} follows.

\section{Residue sums}\label{sec_akm}

Essential to our proof of Theorem~\ref{mainthm} in the previous section is an explicit determination of the residue sums in (\ref{eqn_resfkm}) for many different values of $z = -\frac{1}{4} + \frac{e^{i\theta}}{4}$ on the arc $\mathcal{A}$.   We do so in Theorem~\ref{thm_fintegral} below. To describe this result,
we define the functions
\begin{align}\label{def_Cmk} \mathcal C_{m,k}(\theta) &:= - c_{m,k}(2i)^{-k}\left(\sin(\tfrac{\theta}{2})\right)^{-k}e^{-\frac{\pi i m}{4}}
e^{\frac{\pi m}{2} \left(\frac{1}{2\tan\left(\frac{\theta}{2}\right)} - \sin(\theta)\right)}, \\ \label{def_Dmk}
 \mathcal D_{m,k}(\theta) &:= -  d_{m,k}\cdot 2^{-k} \left(\cos(\tfrac{\theta}{2})\right)^{-k}   e^{\frac{\pi i m}{4}} e^{\frac{\pi m}{2}\left( \frac{\tan\left(\frac{\theta}{2}\right)}{2} -\sin(\theta)\right) }, \end{align}
 where the constants $c_{m,k}$ and $d_{m,k}$ are defined by
 \begin{align}\label{def_cmkdef} c_{m,k} :=   \begin{cases}
1+i, & m\equiv 0 \pmod 4, \ k+\frac12 \equiv 1 \pmod 2, \\
1-i, & m\equiv 0 \pmod 4, \ k+\frac 12 \equiv 0 \pmod 2, \\
0, &  m\equiv 1 \pmod 4, \ k+\frac12 \equiv 0 \pmod 2, \\
0 & m\equiv 3 \pmod 4, \ k+\frac12 \equiv 1 \pmod 2.
\end{cases}
\end{align}
\begin{align}\label{def_dmkdef} d_{m,k} :=   \begin{cases}
1-i, & \text{ if } m\equiv 1 \ (\!\!\!\!\mod 4), k+\frac12 \equiv 0 \ (\!\!\!\!\mod 2), \\
1+i, & \text{ if } m\equiv 3 \ (\!\!\!\!\mod 4), k+\frac12 \equiv 1 \ (\!\!\!\!\mod 2), \\
0, & \text{ if } m\equiv 0 \ (\!\!\!\!\mod 4).
\end{cases}
\end{align}

Additionally, we define four smaller arcs $R_d$ ($1\leq d \leq 4$) contained in $\mathcal{A}$ by
\begin{align*}
 &R_1 :=\left\{   \frac{-1+e^{i\theta}}{4} \  \left| \  \theta \in \left[\frac{5\pi}{12},\frac{\pi}{2}\right) \right. \right\},  &   R_2  := \left\{  \frac{-1+e^{i\theta}}{4} \ \left|  \ \theta \in \left(\frac{\pi}{2},\frac{7\pi}{12}\right] \right. \right\}, \ \ \\
 &R_3 := \left\{ \frac{-1+e^{i\theta}}{4} \ \left|  \ \theta \in \left(\frac{\pi}{3},\frac{5\pi}{12}\right]\right.\right\},  &
R_4 := \left\{  \frac{-1+e^{i\theta}}{4} \ \left|  \ \theta\in \left[\frac{7\pi}{12},\frac{2\pi}{3}\right)\right.\right\}.  \end{align*}
In what follows, we choose $v = .2125$ when $z \in R_1 \cup R_2$ or $\theta = \frac{\pi}{2}$, and we choose $v = .1375$ when $z \in R_3 \cup R_4$.

\begin{thm} \label{thm_fintegral} With notation and hypotheses as above, we have that
\begin{align*} e^{\frac{ik\theta}{2}}&e^{-\frac{\pi m}{2}\sin(\theta)}\int_{-\frac{1}{2} + i v}^{\frac{1}{2} + iv} B_k(z;\tau) e(-m\tau) d\tau     \\ &=\begin{cases}    e^{\frac{ik\theta}{2}} e^{-\frac{\pi m}{2}\sin(\theta)}f_{k,m}(z)  - 2 \cos\left(\frac{k\theta}{2} + \frac{\pi m}{2} - \frac{\pi m \cos(\theta)}{2}\right), & \theta \in \left[\frac{5\pi}{12}, \frac{7\pi}{12}\right],  \\
  e^{\frac{ik\theta}{2}}e^{-\frac{\pi m}{2}\sin(\theta)}  f_{k,m}(z)  \!- 2\cos\left(\tfrac{k\theta}{2} + \tfrac{\pi m}{2} - \tfrac{\pi m \cos(\theta)}{2}\right) +  \mathcal C_{m,k}(\theta), & \theta \in \left(\frac{\pi}{3}, \frac{5\pi}{12}\right], \\
  e^{\frac{ik\theta}{2}}e^{-\frac{\pi m}{2}\sin(\theta)}  f_{k,m}(z)  \!- 2\cos\left(\tfrac{k\theta}{2} + \tfrac{\pi m}{2} - \tfrac{\pi m \cos(\theta)}{2}\right) +  \mathcal D_{m,k}(\theta), & \theta \in \left[\frac{7\pi}{12}, \frac{2\pi}{3}\right).
\end{cases}
\end{align*}
\end{thm}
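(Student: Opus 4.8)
The plan is to evaluate the residue sum appearing in~(\ref{eqn_resfkm}) explicitly, since by that equation the left-hand side of the theorem (the horizontal integral along $C_v$) equals $f_{k,m}(z)$ minus $-2\pi i$ times the sum of residues of $B_k(z;\tau)e(-m\tau)$ in the region $W_{\epsilon,v}$. Thus the entire content of the theorem is the claim that this residue sum produces exactly the main term $2\cos\!\left(\frac{k\theta}{2} + \frac{\pi m}{2} - \frac{\pi m\cos(\theta)}{2}\right)$, possibly augmented by $\mathcal C_{m,k}(\theta)$ or $\mathcal D_{m,k}(\theta)$, after multiplication by the normalizing factor $e^{\frac{ik\theta}{2}}e^{-\frac{\pi m}{2}\sin(\theta)}$. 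First I would locate the poles of the generating function $B_k(z;\tau)$ as a function of $\tau$ inside $W_{\epsilon,v}$. From the defining formula, the denominator is $j(4\tau) - j(4z)$, so poles occur precisely where $4\tau$ is $\SL_2(\Z)$-equivalent to $4z$, i.e.\ where $\tau$ is $\Gamma_0(4)$-equivalent (after the $\tau\mapsto 4\tau$ scaling) to the fixed point $z$ on the arc $\mathcal A$. Because $z$ lies on the lower arc, the dominant pole in the strip is $\tau = z$ itself, together with its images under the relevant coset representatives that land inside $W_{\epsilon,v}$; which additional images enter the region is exactly what depends on $\theta$, and this is the source of the three cases.

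Next I would compute the residue at the principal pole $\tau = z$. Since $j(4\tau)-j(4z)$ has a simple zero there with derivative $4\,j'(4z)$, the residue of $B_k(z;\tau)e(-m\tau)$ is
\[
\mathop{\textnormal{Res}}_{\tau=z} = \frac{f_k(z)f_{2-k}^*(z) + f_k^*(z)f_{2-k}(z)}{4\,j'(4z)}\,e(-mz).
\]
Here I would use the Wronskian-type identity relating $f_k f_{2-k}^* + f_k^* f_{2-k}$ to $j'(4z)$ that must follow from the construction of the canonical bases in~\cite{DJ2}; this is what collapses the residue to a clean expression in $e(-mz)$. Writing $z = -\frac14 + \frac14 e^{i\theta}$ gives $e(-mz) = e^{\frac{\pi i m}{2}} e^{-\frac{\pi m}{2}\sin\theta}\, e^{-\frac{\pi i m \cos\theta}{2}}$, and the pole at the $\Gamma_0(4)$-image of $z$ reflected across the imaginary axis contributes the complex-conjugate exponential. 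Multiplying by the normalizing factor $e^{\frac{ik\theta}{2}}e^{-\frac{\pi m}{2}\sin\theta}$ and the transformation factor picked up by the modular forms under this reflection, the two conjugate contributions combine via $e^{i\phi}+e^{-i\phi} = 2\cos\phi$ into precisely $2\cos\!\left(\frac{k\theta}{2} + \frac{\pi m}{2} - \frac{\pi m\cos\theta}{2}\right)$; the normalizing factor is designed exactly to cancel the weight-$k$ automorphy factor and the growth factor $e^{-\frac{\pi m}{2}\sin\theta}$.

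For the subsidiary terms $\mathcal C_{m,k}$ and $\mathcal D_{m,k}$, I would identify the extra pole that crosses into $W_{\epsilon,v}$ as $\theta$ leaves the central window $[\frac{5\pi}{12},\frac{7\pi}{12}]$: for $\theta\in(\frac\pi3,\frac{5\pi}{12}]$ an image near the cusp structure contributes the term $\mathcal C_{m,k}$, and symmetrically for $\theta\in[\frac{7\pi}{12},\frac{2\pi}{3})$ the term $\mathcal D_{m,k}$. For each I would compute the residue at that image point, tracking the automorphy factor of $f_k, f_k^*$ under the corresponding element of $\Gamma_0(4)$ (which accounts for the factors $(2i)^{-k}(\sin\frac\theta2)^{-k}$ or $2^{-k}(\cos\frac\theta2)^{-k}$ and the constants $c_{m,k}, d_{m,k}$), and verifying that the exponential simplifies to $e^{\frac{\pi m}{2}(\frac{1}{2\tan(\theta/2)} - \sin\theta)}$ or $e^{\frac{\pi m}{2}(\frac{\tan(\theta/2)}{2} - \sin\theta)}$ as recorded in~(\ref{def_Cmk}) and~(\ref{def_Dmk}). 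The choice of the two values of $v$ ($.2125$ versus $.1375$) is precisely calibrated so that the horizontal contour $C_v$ separates these $\theta$-dependent poles correctly while excluding still deeper images; I would verify that the poles in question lie above $C_v$ and that no others do for the stated $\theta$-ranges.

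The main obstacle I expect is the careful bookkeeping of \emph{which} $\Gamma_0(4)$-images of $z$ fall inside $W_{\epsilon,v}$ for each sub-range of $\theta$, together with the exact automorphy factors (including the theta-multiplier $\rho_\gamma = \left(\frac{c}{d}\right)\varepsilon_d^{-1}$ and its interaction with the half-integral weight) that each residue picks up. Getting the phases exactly right — so that the principal pole and its reflection assemble into a real cosine and the extra poles yield precisely the prescribed $c_{m,k}, d_{m,k}\in\{0,1\pm i\}$ with the correct congruence-class dependence on $m\bmod 4$ and $k+\frac12\bmod 2$ — is the delicate computational heart of the argument, and is where the identity for $j'(4z)$ and the explicit half-integral-weight transformation laws must be deployed with care.
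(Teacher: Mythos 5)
Your strategy is the same as the paper's: shift the contour, locate the poles of $B_k(z;\tau)$ at images $w_M(z)$ of $z$, evaluate residues by showing that $F_k(z;\tau)$ divided by $\frac{d}{d\tau}j(4\tau)$ takes explicit constant values (times automorphy factors) at those poles, combine the principal family and the reflected family into the cosine, and attribute $\mathcal C_{m,k}$ and $\mathcal D_{m,k}$ to extra poles entering the region for the outer $\theta$-ranges. Two of your deferrals are substantial work in the paper but not conceptual gaps: the ``Wronskian-type identity'' you invoke is exactly Proposition~\ref{prop_Akconstant}, which the paper proves by a Sturm-bound computation (the case $\tau=z$ is the identity $(-4\pi i)F_k(z,z)=\frac{d}{dz}j(4z)$, checked in $M_2(\Gamma_0(4))$ where the Sturm bound is $1$, with companion computations on $\Gamma_0(16)$ for the translate cases); and the pole bookkeeping involves eight or twelve poles, not two --- four translates $z+\frac r4$ and four quarter-integer translates of $u=\frac{z}{4z+1}$, plus four more in the outer ranges --- whose collapse to the clean cosine requires the identity $e\left(\frac m4\right)\kappa_{-1,k}+\kappa_{0,k}+e\left(-\frac m4\right)\kappa_{1,k}=\frac{-1}{2\pi i}$, valid precisely because $(m,k)$ lies in the set $T$ forced by the plus-space condition.

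The genuine gap is the point $\theta=\frac{\pi}{2}$, which your first case must cover. There $4z=-1+i$ is $\SL_2(\Z)$-equivalent to the elliptic point $i$, so $j'(4z)=0$: your residue formula, which carries $4\,j'(4z)$ in its denominator, is meaningless, and $j(4\tau)-j(4z)$ has a \emph{double} rather than simple zero at $\tau=z$, so both the simple-pole residue calculus and the logarithmic-derivative device fail at this point. The paper isolates this as Case 5: it establishes the formula at $\theta_n^{\pm}=\frac{\pi}{2}\pm\frac1n$ from the generic cases, bounds the integrands uniformly on the contour by a compactness argument, and passes to the limit via the Bounded Convergence Theorem. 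The paper's subsection comparing with integral weight explains why no simpler continuity or ``factor of $2$ from the double zero'' argument (which does work for integer weight) is available here, since for half-integral weight the relevant ratio is not a logarithmic derivative. Without some such limiting argument your proof does not cover $\theta=\frac{\pi}{2}$, and hence does not prove the first case of the theorem as stated.
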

Our proof of Theorem~\ref{thm_fintegral}, which begins with equation~(\ref{eqn_resfkm}), first requires that for a fixed value of $z$,
we determine the location of the poles of the function $B_k(z;\tau)$ lying in  the region $W_{\epsilon,v}$.
Since the denominator of $B_k(z; \tau)$ is $j(4\tau)-j(4z)$, its poles may occur only at points where this denominator is zero.  Thus, for each value of $z$ we examine the set \[ \left\{ \tau \in W_{\epsilon,v} \ \left| \  j(4\tau) = j(4z)\right.\right\}.\]  Fixing $z\in R_d$ and choosing the values of $v$ noted above,
we compute that the function $B_k(z;\tau)$ has either eight or twelve poles in this region.  They occur at points of the form $\tau = w_M(z)$, where $w_M(z) = \frac14 M(4z)$ for each matrix $M$ in a set $\mathcal{M}_d \subseteq \textnormal{SL}_2(\mathbb Z)$.
Here $M$ acts on points $z\in \mathcal{H}$ by $M(z) = \frac{az+b}{cz+d}$, as usual, and
the sets $\mathcal M_d $ are defined by
\begin{align*} \mathcal M_1&:=\{M_r^{(1)}, M_r^{(3)} \ | \  -1\leq r\leq 2\}, \\
\mathcal M_2&:=\{M_r^{(1)} \ | \ 0\leq r \leq 3\} \cup \{M_r^{(3)} \ | \ -2 \leq r\leq 1\}, \\
 \mathcal M_3&:=\{M_r^{(1)}, M_r^{(3)} \ | \ -1\leq r \leq 2\} \cup \{M_r^{(2)} \ | \ -2\leq r \leq 1\}, \\
 \mathcal M_4&:=\{M_r^{(1)} \ | \ 0\leq r \leq 3\} \cup \{M_r^{(3)} \ | \ -2\leq r \leq 1\} \cup \{M_r^{(4)} \ | \ -1\leq r \leq 2\}, \end{align*}
in terms of the integer matrices
\begin{align*} M_r^{(1)} :=   \sm{1}{r}{0}{1}, \
 M_r^{(2)}  :=  \sm{r}{-1}{1}{0},  \
 M_r^{(3)}  :=  \sm{r}{r-1}{1}{1}, \
  M_r^{(4)}  :=  \sm{r}{2r-1}{1}{2}.   \end{align*}
In the proof, we will need to compute specific values of the function \[A_k(z;\tau) :=  \frac{f_k(z)f_{2-k}^*(\tau) + f_k^*(z)f_{2-k}(\tau)}{\frac{d}{d\tau} j(4\tau)}\] when $\tau$ is equal to one of these poles $w_M(z)$ of $B_k(z;\tau)$.
These values are given in Proposition~\ref{prop_Akconstant} in section \ref{pfsec2}, in terms of the constants
\[\kappa_{r,k} :=
\begin{cases} \frac{-1}{8\pi i}(1+i^r), & r\equiv 0 \pmod{2}, \\
\frac{-1}{8\pi i}(1+i^r (-1)^{k+\frac12}), &   r\equiv 1 \pmod{2}.\end{cases}\]
We devote the remainder of this section to the proof of Theorem~\ref{thm_fintegral}.

\begin{proof}[Proof of Theorem~\ref{thm_fintegral}]
We divide the proof into five cases, depending on the value of $z$.  We begin by rewriting $B_k(z;\tau)$ as \begin{align*}  B_k(z;\tau) = A_k(z;\tau) \frac{d}{d\tau} \log \left(\widetilde{j}(\tau)\right), \end{align*}
and seek to determine \begin{align}\label{eqn_rescomp}  -2\pi i \mathop{\sum_{\tau = w_M(z) }}_{M\in\mathcal M_d}   \textnormal{Res}\left(e(-m\tau)A_k(z;\tau) \frac{d}{d\tau} \log \left(\widetilde{j}(\tau)\right) \right)  \end{align} for each $d\in\{1,2,3,4\}$.
The reason for rewriting $B_k(z;\tau)$ in this way is that $\widetilde{j}(\tau) := j(4\tau)-j(4z)$ has a simple zero at any of the  poles $\tau= w_M(z)$ described above,
which implies that its logarithmic derivative has simple poles at these points with residue equal to 1.  Combining this with Proposition~\ref{prop_Akconstant} allows us to then calculate (\ref{eqn_rescomp}) explicitly.
The single case $z=\frac{-1+i}{4}$, with $\theta=\frac{\pi}{2}$, is not of this nature, and must be treated slightly differently.   We address this in Case 5 below.
\medskip \ \\ {\emph{Case 1: $z\in R_1$.}}  For $z\in R_1$, the function $B_k(z;\tau)$ has eight poles in $W_{\epsilon,v}$  as described above.  From (\ref{eqn_rescomp}) and the discussion following, as well as Proposition~\ref{prop_Akconstant}, we find that
\begin{align}\nonumber & -2\pi i \mathop{\sum_{\tau = w_M(z) }}_{M\in\mathcal M_1}   \textnormal{Res}\left(e(-m\tau)A_k(z;\tau) \frac{d}{d\tau} \log \left(\widetilde{j}(\tau)\right) \right)  \\   &= -2\pi i \left(e(-m z) \sum_{r=-1}^2 e\left(-\frac{m r}{4}\right) \kappa_{r,k} + e(-m u)(4z+1)^{-k}\left(e\left(\frac{m}{4}\right) \kappa_{-1,k} + \kappa_{0,k} + e\left(-\frac{m}{4}\right) \kappa_{1,k}\right) \right) \\
\label{rescasei2} &= -2\pi i ( e(-mz) + e(-mu)(4z+1)^{-k})\left(e\left( \frac{m}{4}\right)\kappa_{-1,k} + \kappa_{0,k} + e\left(-\frac{m}{4}\right) \kappa_{1,k}\right).
\end{align}

Consider the set of ordered pairs \begin{align*} T  &:=
\left \{(m,k)\in \mathbb Z \times \left(\left.\tfrac{1}{2}+\mathbb Z\right) \  \right| \ \begin{array}{l}  m \equiv 1\pmod{4} \text{ and } k + \frac{1}{2} \equiv 0 \pmod{2} \\
 m \equiv 3\pmod{4} \text{ and } k + \frac{1}{2} \equiv 1 \pmod{2} \\
 m \equiv 0\pmod{4}
\end{array} \right\}.  \end{align*}  From the definition of $f_{k, m}$, we know that
the pair $(m,k)$ must be in the set $T$.  Using this fact,
a short calculation using the definition of $\kappa_{r,k}$ reveals that for $(m,k)\in T$, we have
\begin{align*}   e\left( \frac{m}{4}\right)\kappa_{-1,k} + \kappa_{0,k} + e\left(-\frac{m}{4} \right)\kappa_{1,k}   =
 \frac{-1}{2\pi i}.\end{align*}
Next we simplify, using that $z=-\frac{1}{4} + \frac{1}{4}e^{i\theta}$ and $u=\frac{z}{4z+1}$, to obtain
\begin{align*}
e(-mz) + e(-mu)(4z+1)^{-k} &= e^{-\frac{\pi i m}{2}(-1+\cos(\theta) + i\sin(\theta))} + e^{-\frac{\pi i m}{2}(1-\cos(\theta) + i\sin(\theta))}e^{-i k \theta} \\ &=2e^{\frac{\pi m \sin(\theta)}{2}}e^{-\frac{ik \theta}{2}} \cos\left(\frac{k\theta}{2} + \frac{\pi m}{2} - \frac{\pi m \cos(\theta)}{2}\right).
\end{align*}
Combining the above, we find that~(\ref{rescasei2}) reduces to
\begin{align}\label{eqn_casei3}
2e^{\frac{\pi m \sin(\theta)}{2}}e^{-\frac{ik \theta}{2}} \cos\left(\frac{k\theta}{2} + \frac{\pi m}{2} - \frac{\pi m \cos(\theta)}{2}\right).\end{align}
We combine this with (\ref{eqn_resfkm}) and multiply  through by $e^{\frac{-\pi m \sin(\theta)}{2}} e^{\frac{i k \theta}{2}}$, which yields
Theorem~\ref{thm_fintegral} in the case $\theta \in \left[\frac{5\pi}{12},\frac{\pi}{2}\right)$.
\medskip \ \\ {\emph{Case 2: $z\in R_2$.}}   We proceed as in Case 1, and begin with (\ref{eqn_rescomp}).   Here, when $z\in R_2$, the function $B_k(z;\tau)$ has eight poles in $W_{\epsilon,v}$  as described above. Note that the sets $\mathcal M_1$ and $\mathcal M_2$ which determine these poles in Case 1 and Case 2 (respectively) are  identical, save for  the matrix $\sm{1}{-1}{0}{1}$ which is replaced by $\sm{1}{3}{0}{1}$, and the matrix $\sm{2}{1}{1}{1}$ which is replaced by $\sm{-2}{-3}{1}{1}$.  It is not difficult to see that $e\left(\frac{m}{4}\right) = e\left(\frac{-3m}{4}\right)$, and that $\kappa_{-1,k} = \kappa_{3,k}$, which reveals that the residue contributions arising from $\sm{1}{-1}{0}{1}$ and $\sm{1}{3}{0}{1}$ are the same.  Moreover, for $w=w_{M_r^{(3)}}(z)$ where $r=\pm 2$, we have from Proposition~\ref{prop_Akconstant}
 that
$A_k(z,w)(4z+1)^k = \kappa_{1,k}$.  Thus, the residue contributions arising from $\sm{2}{1}{1}{1}$ and $\sm{-2}{-3}{1}{1}$ are also the same.  Thus, Case 2 is   identical to Case 1.  This yields
Theorem~\ref{thm_fintegral} in the case $\theta \in \left(\frac{\pi}{2},\frac{7\pi}{12}\right]$.
\medskip \ \\ {\emph{Case 3: $z\in R_3$.}}  With $z\in R_3,$  the function $B_k(z;\tau)$ has twelve poles in $W_{\epsilon,v}$ as described above.  Note that $\mathcal M_1\subset \mathcal M_3$,  thus, for those matrices in $\mathcal M_3 \cap \mathcal M_1 = \mathcal M_1$, we may use the calculation above in Case 1.  The remaining matrices in $\mathcal M_3\setminus \mathcal M_1$ give the following contribution to the total residue (\ref{eqn_rescomp}):
\begin{align}
-2\pi i &\mathop{\sum_{\tau = w_M(z) }}_{M\in\mathcal M_3 \setminus \mathcal M_1}e(-m \tau) A_k(z;\tau)\nonumber \\
&= -2 \pi i   (4z)^{-k}  e\left(\frac{m}{16z}\right)  \left[ \kappa_{-1,k}  +
e\left(  \frac{-m}{4}\right) \kappa_{-1,k}   + e\left(  \frac{m}{4}\right) i^{2k} \kappa_{1,k} +
e\left(  \frac{m}{2}\right)  i^{2k}\kappa_{1,k}\right]\nonumber \\
&= (4z)^{-k} \cdot e\left(\frac{m}{16z}\right)\cdot c_{m,k},\label{lastaddres}\end{align} where the constant $c_{m,k}$ is defined in (\ref{def_cmkdef}).

Using the fact that $z=-\frac{1}{4} + \frac{1}{4}e^{i\theta}$,  equation~(\ref{lastaddres}) becomes \begin{align}\nonumber
& c_{m,k} \cdot (-1 + e^{i\theta})^{-k} \cdot e\left(\frac{m}{4(-1 + e^{i\theta})}\right)
=  c_{m,k} \cdot (-1 + e^{i\theta})^{-k}e\left(\frac{m e^{\frac{-i\theta}{2}}}{8i \sin\left(\tfrac{\theta}{2}\right) }\right) \\ \label{bad} &= c_{m,k}\cdot (-1 + e^{i\theta})^{-k}e^{\frac{-\pi i m}{4}} e^{\frac{\pi m}{4\tan \left(\frac{\theta}{2}\right ) }}  = c_{m,k}\cdot (2 i)^{-k} \left(\sin\left(\tfrac{\theta}{2}\right)\right)^{-k} e^{\frac{-ik\theta}{2}} e^{\frac{-\pi i m}{4}} e^{\frac{ \pi m }{ 4\tan\left(\frac{\theta}{2}\right) }}.
\end{align}
We already showed that (\ref{rescasei2}) reduces to (\ref{eqn_casei3}).
Adding (\ref{bad}) and (\ref{eqn_casei3}) and then multiplying their sum by $e^{\frac{ik\theta}{2}} e^{\frac{-\pi m \sin(\theta)}{2}}$ proves Theorem~\ref{thm_fintegral} in the case $\theta \in \left(\frac{\pi}{3},\frac{5\pi}{12}\right]$.

\medskip \ \\ {\emph{Case 4: $z\in R_4$.}}  With $z\in R_4,$  the function $B_k(z;\tau)$ has twelve poles in $W_{\epsilon,v}$.
Note that $\mathcal M_2\subset \mathcal M_4$,  so for those matrices in $\mathcal M_4 \cap \mathcal M_2 = \mathcal M_2$, we may use the calculation above in Case 2, which we showed was identical to Case 1.  After some short calculations, proceeding as above, we find that the remaining matrices in $\mathcal M_4\setminus \mathcal M_2$ give the following contribution to the total residue (\ref{eqn_rescomp}):
\begin{align}
-2&\pi i  \mathop{\sum_{\tau = w_M(z) }}_{M\in\mathcal M_4 \setminus \mathcal M_2}e(-m \tau) A_k(z;\tau)\nonumber \\
&= -2 \pi i   (4z+2)^{-k}   e\left(\frac{m}{16z+8}\right) \!\! \left[ \kappa_{-1,k}  +
e\left(  \frac{m}{4}\right)i^{2k} \kappa_{-1,k}   + e\left(  -\frac{m}{4}\right)  \kappa_{1,k} +
e\left(  -\frac{m}{2}\right)e^{-\pi i k}  \kappa_{1,k}\right]\nonumber \\
&= (4z+2)^{-k} \cdot e\left(\frac{m}{16z+8}\right)\cdot d_{m,k},\label{lastaddres2}\end{align} where the constant $d_{m,k}$ is defined in (\ref{def_dmkdef}).
We rewrite $z=-\frac{1}{4} + \frac{1}{4}e^{i\theta}$ so that (\ref{lastaddres2}) becomes \begin{align}\nonumber
& d_{m,k} \cdot (1 + e^{i\theta})^{-k}e\left(\frac{m e^{\frac{-i\theta}{2}}}{4(e^{\frac{-i\theta}{2}} +
e^{\frac{i\theta}{2}})}\right) \\
\label{bad2} &= d_{m,k} \cdot (1 + e^{i\theta})^{-k}e\left(\frac{m e^{\frac{-i\theta}{2}}}{8\cos\left(\tfrac{\theta}{2}\right) }\right) = d_{m,k}\cdot 2^{-k} \left(\cos\left(\tfrac{\theta}{2}\right)\right)^{-k} e^{\frac{-ik\theta}{2}} e^{\frac{\pi i m}{4}} e^{\frac{\pi m \tan\left(\frac{\theta}{2}\right)}{4}}.
\end{align}
Adding (\ref{bad2}) and (\ref{eqn_casei3}) and multiplying their sum by $e^{\frac{ik\theta}{2}} e^{\frac{-\pi m \sin(\theta)}{2}}$ yields
Theorem~\ref{thm_fintegral} in the case $\theta \in \left[\frac{7\pi}{12},\frac{2\pi}{3}\right).$
\medskip \ \\ {\emph{Case 5: $z=\frac{-1+i}{4} \ (\text{i.e. } \theta=\frac{\pi}{2}).$}}
We define for integers $n\geq 4$   \begin{align*} \theta_n^{\pm}  := \frac{\pi}{2} \pm \frac{1}{n}, \  \
z_n^{\pm}  := -\frac{1}{4} + \frac{e^{i \theta_n^{\pm}}}{4}.\end{align*}
  Then each $z_n^+ \in R_2$ and $z_n^- \in R_1$.   From Case 1 and Case 2 of Theorem~\ref{thm_fintegral} established above, we have that \begin{align}\label{eqn_pm} e^{\frac{ik\theta_n^\pm}{2}} e^{-\frac{\pi m}{2}\sin(\theta_n^\pm)} f_{k,m}(z)    - 2 \cos&\left(\frac{k\theta_n^\pm}{2} + \frac{\pi m}{2} - \frac{\pi m \cos(\theta_n^\pm)}{2}\right)    = \int_{\mathcal I} h_n^\pm(\tau)d\tau, \end{align}
where the interval $\mathcal I$ is equal to $[-\frac12 + i v ,\frac12 + i v ]$ and the functions $h_n^\pm(\tau)$ and $F_k$ are defined by \begin{align*}
h_n^\pm(\tau) &= h_{n,m,k}^{\pm}(\tau) := e^{\frac{ik\theta_n^\pm}{2}}e^{-\frac{\pi m}{2}\sin(\theta_n^\pm)} \frac{F_k(z_n^\pm;\tau)e(-m\tau)}{j(4\tau) - j(4z_n^\pm)},
\end{align*}
\begin{align}\label{def_Fkzt}F_k(z;\tau) := f_k(z)f_{2-k}^*(\tau) + f_k^*(z)f_{2-k}(\tau).\end{align} We have that $ \lim_{n\to \infty} h_n^{\pm}(\tau) = h(\tau)$ on $\mathcal I$, where \[h(\tau) = h_{m,k}(\tau)  := e^{\frac{ik\pi}{4}}e^{-\frac{\pi m}{2} } \frac{F_k(-\tfrac14 + \tfrac{i}{4};\tau)e(-m\tau)}{j(4\tau) - 1728} .\]  By construction, $h_n^{\pm}(\tau)$ and $h(\tau)$ are integrable on $\mathcal I $, one boundary of our rectangular region $W_{\epsilon,v}$.

Consider the function $H(\theta,t)$ defined on the domain $\mathcal J := \left[\frac{\pi}{2}-\frac14,\frac{\pi}{2}+\frac14\right] \times \left[-\frac12,\frac12\right]$ by
\[ H(\theta,t) = H_{k,m,\epsilon}(\theta,t):= e^{\frac{ik\theta}{2}}e^{-\frac{\pi m}{2}\sin(\theta)} \frac{F_k\left(-\frac14 + \frac{e^{i\theta}}{4};t + i\epsilon \right)e(-m(t+i\epsilon))}{j(4(t+i\epsilon)) - j\left(4\left(-\frac14 + \frac{e^{i\theta}}{4}\right)\right)}.\]  Note that for integers $n\geq 4$, we have that $\theta_n^{\pm} \in \left[\frac{\pi}{2}-\frac14,\frac{\pi}{2}+\frac14\right]$, and that $H(\theta_n^{\pm},t) = h_n^{\pm}(\tau)$ when $\tau = t+iv$  is on the path of integration $\mathcal I$ (i.e. $t \in [-\frac12,\frac12]$).  By construction, the function $H(\theta,t)$ has no poles on $\mathcal J$ and is continuous, so is bounded by some constant $K$ on this region.  Therefore, as a function of $t$, for fixed $\theta_n^{\pm}$, the function $H(\theta_n^{\pm},t)$ is
bounded by $K$ for $t\in [-\frac12,\frac12]$, and hence $|h_n^\pm(\tau)|\leq K$ for all $\tau \in \mathcal I  $.  By the Bounded Convergence Theorem and (\ref{eqn_pm}), we have that
\[ e^{\frac{ik\pi}{4}} e^{-\frac{\pi m}{2} } f_{k,m}\left(-\tfrac14 + \tfrac{i}{4}\right)    - 2 \cos \left(\tfrac{k\pi}{4} + \tfrac{\pi m}{2}  \right)  =   \lim_{n\to \infty} \int_{-\frac12 + i v }^{\frac12+iv } h_n^{\pm}(\tau) d\tau =  \int_{-\frac12 + i v }^{\frac12+iv} h(\tau) d\tau. \]     Thus, Theorem~\ref{thm_fintegral} holds for $\theta=\frac{\pi}{2}$.
\end{proof}

\subsection{The integral weight case}
It is interesting to compare this approximation for $f_{k, m}(z)$ at $\theta = \frac{\pi}{2}$ with a similar computation for modular forms of integral weight in a canonical basis.  For an even integer $k$, write $k = 12\ell + k'$, where $k' \in \{0, 4, 6, 8, 10, 14\}$.  It was shown in~\cite{DJ1} that there exists a basis $\{f_{k, m}\}_{m \geq -\ell}$ for the space $M_k^!$, where $f_{k, m}(\tau) = q^{-m} + O(q^{\ell+1})$, and that the zeros of these basis elements lie on the unit circle when $m$ is large enough.  The proof depends on the contour integral
\[f_{k, m}(z) = \int_{-\frac{1}{2}+iA}^{\frac{1}{2}+iA} \frac{\Delta(z)^\ell}{\Delta(\tau)^\ell} \frac{E_{k'}(z)}{E_{k'}(\tau)} \frac{E_{14}(\tau)}{\Delta(\tau)} \frac{1}{j(\tau)-j(z)} q^{-m} d\tau = \int_{-\frac{1}{2}+iA}^{\frac{1}{2}+iA} G(\tau, z) d\tau.\]
Fixing $z = e^{i\theta}$ for $\theta \in \left(\frac{\pi}{2}, \frac{2\pi}{3}\right)$, the contour can be moved downward from $A > 1$ to pick up residues at $\tau = z$ and $\tau = -\frac{1}{z}$; these residues can be computed using the fact that $\frac{d}{d\tau} j(\tau) = \frac{E_{14}(\tau)}{\Delta(\tau)}$ to give
\[\int G(\tau, z) d\tau = f_{k, m}(z) - e^{-2\pi i m z} - z^{-k}e^{\frac{2\pi i m}{z}}.\]
Multiplying by $e^{\frac{ik\theta}{2}}e^{-2\pi m \sin(\theta)}$, we find that $e^{\frac{ik\theta}{2}}e^{-2\pi m \sin(\theta)} f_{k, m}(e^{i\theta}) - 2\cos\left(\frac{k\theta}{2} - 2\pi m \cos(\theta)\right)$ is equal to the weighted integral.

It is natural to ask what happens when $\theta = \frac{\pi}{2}$, when the denominator of the integrand has only one zero on the unit circle, at $i$, instead of the two zeros at $e^{i\theta}, e^{i(\pi-\theta)}$ from before.  Ideally, the function approximating $f_{k, m}$ should be continuous, so the cosine term should become
\[-2 \cos\left(\frac{k\pi}{4}\right) = \begin{cases} 0 & \mbox{if } k \equiv 2 \pmod{4}, \\ -2 & \mbox{if } k \equiv 0 \pmod{8}, \\ 2 & \mbox{if } k \equiv 4 \pmod{8}. \end{cases}\]

In this integral weight case,  continuity can be proved without invoking the Bounded Convergence Theorem.  In the $k \equiv 2 \pmod{4}$ case, $E_{k'}(z)$ has a zero at $z = e^{\frac{i\pi}{2}} = i$, and fixing $z = i$ means that the entire integrand is equal to zero; therefore, the residue is zero.  For the $k \equiv 0 \pmod{4}$ cases, we note that at the point $\tau = i$, the function $\frac{d}{d\tau} j(\tau) = \frac{E_{14}(\tau)}{\Delta(\tau)}$ has a simple zero, implying that $j(\tau) - j(i)$ has a double zero.  Thus, the logarithmic derivative
\[ \frac{E_{14}(\tau)}{\Delta(\tau)} \frac{1}{j(\tau)-j(z)} = \frac{\frac{d}{d\tau} (j(\tau)-j(z))}{j(\tau)-j(z)}\] contributes a factor of $2$ to the residue due to the double zero, implying that we get a residue
of either
$-2e^{-2\pi i m z}$ or $-2 z^{-k} e^{\frac{2\pi i m}{z}}$ at this point.  These expressions are equal if $k \equiv 0 \pmod{4}$, and
after multiplying by $e^{\frac{i k \pi}{4}} e^{-2\pi m \sin(\frac{\pi}{2})}$, we get the value of
$\pm2$ that we expected.
Thus, the approximation of $f_{k, m}(z)$ is continuous at $\theta = \frac{\pi}{2}$.

This continuity argument does not seem to work when $k \in \Z + \frac{1}{2}$. In the integral weight case, fixing a value of $z$ results in a quotient of modular forms in the variable $\tau$ inside the integral, with a form of weight $2$ in the numerator and a form of weight $0$ in the denominator, and this ratio can be written as a logarithmic derivative, allowing the computation of residues.  For half integral weight, the ratio of modular forms in $\tau$ is of weight $2-k$ in the numerator and weight $0$ in the denominator, and does not simplify to a logarithmic derivative in the same way.  At the poles of $B_k(z; \tau)$ in the region $W_{\epsilon, v}$, though, the ratio $A_k(z; \tau)$ of the numerator to the derivative of $j(4\tau)$ is constant, and we can compute its values and use the logarithmic derivative to compute residues as before.  However, when $\theta = \frac{\pi}{2}$, the derivative of $j(4\tau)$ is $0$, and the argument breaks down.

\section{Evaluating the function \texorpdfstring{$A_k(z;\tau)$}{Ak}}\label{pfsec2}
 As we have seen in the previous section, our proof of Theorem~\ref{thm_fintegral} requires specific values of the function \[A_k(z;\tau) :=  \frac{f_k(z)f_{2-k}^*(\tau) + f_k^*(z)f_{2-k}(\tau)}{\frac{d}{d\tau} j(4\tau)}\] when $\tau$ is specialized to be among these poles $w_M(z)$ of $B_k(z;\tau)$.
We state these required results in Proposition~\ref{prop_Akconstant} below, in terms of the constants
\[\kappa_{r,k} :=
\begin{cases} \frac{-1}{8\pi i}(1+i^r), & r\equiv 0 \pmod{2}, \\
\frac{-1}{8\pi i}(1+i^r (-1)^{k+\frac12}), &   r\equiv 1 \pmod{2}.\end{cases}\]
For a function $f:\mathbb H\to \mathbb C$, we let $Z(f) := \{z \in \mathbb H \ | \ f(z) = 0\}$.
 \begin{proposition}\label{prop_Akconstant} Let $k \in \frac{1}{2} + \mathbb Z,$ $r\in \mathbb Z$, and let $z \in \mathbb H \setminus Z\left(\frac{d}{d\tau} j(4\tau)\right)$.  We have that
\begin{align*} A_k(z;w_{M_r^{(1)}}(z) ) &=  \kappa_{r,k}, \\
A_k(z;w_{M_r^{(2)}}(z) ) &= (4z)^{-k}\cdot \begin{cases}\kappa_{-1,k}, &  r=0,1, \\
 i^{2k} \kappa_{1,k}, & r=-1,-2,
\end{cases} \\
A_k(z;w_{M_r^{(3)}}(z) ) &=  (4z+1)^{-k} \cdot \begin{cases} \kappa_{0,k}, & r=1, \\
 0, & r=-1, \\
\kappa_{1,k}, & r=\pm 2, \\
\kappa_{-1,k}, & r =0, \end{cases}  \\
A_k(z;w_{M_r^{(4)}}(z) ) &= (4z+2)^{-k} \cdot \begin{cases} \kappa_{1,k}, & \text{if } r=1, \\
\kappa_{-1,k}, & \text{if } r=0,\\
i^{2k}\kappa_{-1,k}, & \text{if } r=-1, \\
e^{-\pi i k} \kappa_{1,k}, & \text{if } r=2.
 \end{cases}
\end{align*}
\end{proposition}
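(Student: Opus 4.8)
The plan is to evaluate the numerator and denominator of $A_k(z;w_M(z))$ separately, reducing everything to modular transformation laws together with a single bilinear identity. Throughout write $M=\sm{a}{b}{c}{d}\in\SL_2(\Z)$, so that $w_M(z)=\tfrac14 M(4z)$ and $4w_M(z)=M(4z)$. First I would handle the denominator: since $j$ is $\SL_2(\Z)$-invariant we have $j(4w_M(z))=j(M(4z))=j(4z)$, which already confirms that $w_M(z)$ is a zero of $j(4\tau)-j(4z)$. Differentiating the identity $j(M\tau)=j(\tau)$ and using $\tfrac{d}{dw}M(w)=(cw+d)^{-2}$ gives $j'(M(4z))=(4cz+d)^2\,j'(4z)$, hence
\[\frac{d}{d\tau}j(4\tau)\Big|_{\tau=w_M(z)}=(4cz+d)^2\,\frac{d}{d\tau}j(4\tau)\Big|_{\tau=z}.\]
The factor $(4cz+d)^2$ is exactly what produces the automorphy factors $(4z)^{-k}$, $(4z+1)^{-k}$, $(4z+2)^{-k}$ in the statement, since $4cz+d$ equals $1,\,4z,\,4z+1,\,4z+2$ for $M_r^{(1)},M_r^{(2)},M_r^{(3)},M_r^{(4)}$ respectively.

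Next I would treat the numerator $F_k(z;w_M(z))=f_k(z)f_{2-k}^*(w_M(z))+f_k^*(z)f_{2-k}(w_M(z))$. Writing $w_M(z)=\gamma_M(z)$ with $\gamma_M=\sm{a}{b/4}{4c}{d}$ (which has determinant $1$ but generally lies outside $\Gamma_0(4)$), I would rewrite $f_{2-k}(w_M(z))$ and $f_{2-k}^*(w_M(z))$ via the weight $2-k$ transformation law for plus-space forms, decomposing the action of $\gamma_M$ into a translation, an element of $\Gamma_0(4)$, and (for $c\neq 0$) the Fricke-type map $\tau\mapsto-1/(4\tau)$. For the translations $M_r^{(1)}$ this is elementary: $w_M(z)=z+\tfrac r4$ merely twists the $n$-th Fourier coefficient of $f_{2-k},f_{2-k}^*$ by $i^{nr}$, and the plus-space congruence $(-1)^s n\equiv 0,1\pmod 4$ collapses this twist into the constants $\kappa_{r,k}$. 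For $M_r^{(2)},M_r^{(3)},M_r^{(4)}$ the maps $w_M$ carry $z$ toward the cusps $0$ and $\tfrac12$, so here I would invoke the explicit cusp expansions $g^{(0)}$ and $g^{(\frac12)}$ recorded in Section~\ref{sec2}, whose arguments $\tfrac{\tau-1}{16},\tfrac{2\tau+1}{8},\dots$ are precisely of the form $w_M$.

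The crux of the argument is the diagonal case $M=M_0^{(1)}=I$: one must establish the Wronskian-type identity $A_k(z;z)=\kappa_{0,k}$, equivalently $f_k(z)f_{2-k}^*(z)+f_k^*(z)f_{2-k}(z)=\tfrac{-1}{\pi i}\,j'(4z)$. This follows from the fact that $B_k(z;\tau)=\sum_m f_{k,m}(z)q^m$ is the generating function for the canonical basis, so the residue of $B_k(z;\tau)$ at its simple pole $\tau=z$ must reproduce the normalization of the $f_{k,m}$; alternatively it can be read off from the construction in~\cite{DJ2}. With this identity in hand, every other value in the Proposition reduces to it: after applying the transformation laws of the previous step, the remaining $z$-dependence is exactly $(4cz+d)^{2-k}j'(4z)$, and the accumulated multiplier constants assemble into $\kappa_{r,k}$ together with the factors $i^{2k}$ and $e^{-\pi i k}$ that distinguish the various $r$.

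I expect the main obstacle to be the precise determination of the multiplier constants for the matrices $\gamma_M\notin\Gamma_0(4)$. Unwinding the half-integral theta multiplier $\rho_\gamma=\left(\tfrac{c}{d}\right)\varepsilon_d^{-1}$ through the Fricke map and tracking the branch of the weight $2-k$ automorphy factor is exactly what produces the powers of $i$, the dependence on $r\bmod 2$, and the signs $(-1)^{k+\frac12}$ in the definition of $\kappa_{r,k}$; this bookkeeping, carried out case by case in $r$, is where all the genuine difficulty lies. A related subtlety is the exceptional value $A_k(z;w_{M_{-1}^{(3)}}(z))=0$, which does not come from $\kappa_{-1,k}$ vanishing but from an additional cancellation in the numerator at that particular pole, and must be verified separately.
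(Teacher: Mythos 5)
You have the right skeleton---reduce every pole to a translation point $\tau = y + \frac{r}{4}$ and pin down the resulting ``diagonal'' values---but the proposal has gaps at precisely the places where all the real work lies. The first is your treatment of the crux identity $A_k(z;z)=\kappa_{0,k}$. Deriving it from the generating function is circular: the expansion $B_k(z;\tau)=\sum_m f_{k,m}(z)q^m$ is valid only for $\operatorname{Im}\tau$ large (above all the poles), and the residue of $B_k(z;\tau)$ at $\tau=z$ \emph{is by definition} $A_k(z;z)$, exactly the quantity being sought; in the Duke--Jenkins framework the generating function is deduced from this Wronskian-type identity, not the other way around. The paper instead proves it by a finite verification: writing $f_k=\Delta(4z)^a f_{b+\frac12}$, the function $g_{b,0}(z)=f_{b+\frac12}(z)f^*_{25-b+\frac12}(z)+f^*_{b+\frac12}(z)f_{25-b+\frac12}(z)$ is shown to lie in $M_{26}(\Gamma_0(4))$ with trivial character (the theta multipliers contribute $\rho^{52}=1$), and is then matched against $\frac{d}{dz}j(4z)$ via the Sturm bound, separately for each of the twelve values of $b$. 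Moreover, the cases $r\not\equiv 0\pmod{4}$ do \emph{not} ``collapse'' formally out of the diagonal case: the twist by $e(nr/4)$ splits $f_{2-k}$ into two congruence pieces, and one needs the diagonal value of each piece separately. That is a second, independent identity (equivalent to the case $r=2$, i.e.\ $F_k(z;z+\tfrac12)=0$, which is the source of $\kappa_{\pm 2,k}=0$), and since the twisted forms $f_{k,\chi_2}$ live on $\Gamma_0(16)$, the paper runs further Sturm-bound verifications at level $16$ for $r=1,2,3$. Your proposal produces none of these extra identities.

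The second gap is your route for the matrices with $c\neq 0$. You propose to factor through the Fricke-type map $\tau\mapsto -1/(4\tau)$, but $f_{2-k}$ and $f^*_{2-k}$ are Kohnen plus-space forms, and the plus space is not preserved by the Fricke involution with a scalar automorphy factor (Kohnen's relation involves $U_4$ as well), so the transformation law you would need simply is not available; likewise the cusp-expansion formulas $g^{(0)},g^{(\frac12)}$ involve undetermined constants and two-term sums, so they cannot be used to evaluate a form at a single point. The paper sidesteps both issues: every pole $w_{M_r^{(d)}}(z)$ with $d\in\{2,3,4\}$ is rewritten as $\gamma\left(y+\tfrac{r'}{4}\right)$ with $\gamma\in\Gamma_0(4)$ and $y\in\{z,u\}$, where $u=\frac{z}{4z+1}=\sm{1}{0}{4}{1}z$ is itself a $\Gamma_0(4)$-image of $z$; then only the weight $2-k$ transformation law under $\Gamma_0(4)$ and the translation case---already established for \emph{every} point of $\mathbb H$, hence applicable at $u$---are needed. (A small correction in the same spirit: the vanishing $A_k(z;w_{M_{-1}^{(3)}}(z))=0$ is not an extra cancellation requiring separate verification; in the paper it is $i^{2k}\kappa_{-2,k}$, which vanishes because $\kappa_{-2,k}=\kappa_{2,k}=0$, i.e.\ it is again the $r\equiv 2\pmod{4}$ translation case.)
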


In section~\ref{sec_akpart1}, we first establish the Proposition in the case $\tau = w_{M_r^{(1)}}(z)$.  We then use that result to establish the Proposition in the remaining cases in section~\ref{sec_akpart2}.
\subsection{Proof of Proposition~\ref{prop_Akconstant} part 1: $\tau = w_{M_r^{(1)}}(z)$} \label{sec_akpart1}
 To prove Proposition~\ref{prop_Akconstant} in the case $\tau = w_{M_r^{(1)}}(z)$ for any $r\in\mathbb Z$, our starting point begins with work of Duke and the second author, who show in \cite{DJ2} for any
 any $k = s + \frac12, s \in \mathbb Z$  that
 \[f_k(z) = \Delta(4z)^a f_{b+\frac{1}{2}}(z), \ \ \ f_k^*(z)=\Delta(4z)^a f^*_{b+\frac{1}{2}}(z),\] where $s=b+12a,$ for some $a\in \mathbb Z$  and $b \in S$, where
 \[S:= \{6,8,9,10,11,12,13,14,15,16,17,19\}.\]  The 24 forms $f_{b+\frac12}(z), f_{b+\frac12}^*(z)$ for $b\in S$ are given in the Appendix of \cite{DJ2}, as explicit polynomials in the weight 2 Eisenstein series $F$ on $\Gamma_0(4)$, and the weight $\frac12$ modular theta function $\vartheta$.
 Thus, writing $k=12 a + b + \frac12$ in this way, a short calculation reveals that we may write the function $F_k$ from (\ref{def_Fkzt}) as
\begin{align}\label{fkztrewrite}F_k(z;\tau) = \Delta(4z)^{a}\Delta(4\tau)^{-2-a} g_b(z;\tau),\end{align}
 where
\[g_b(z;\tau) := f_{b+\frac12}(z)f^*_{25-b+\frac12}(\tau) + f_{b+\frac12}^*(z)f_{25-b+\frac12}(\tau).\]  We let $\tau = w_{M_r^{(1)}}(z) = z + \frac{r}{4},$ and find from (\ref{fkztrewrite}), and the fact that $\Delta(z)=\Delta(z+1)$, that
 \begin{align}\label{eqn_Fgbj}F_k\left(z;z+\tfrac{r}{4}\right) & =\Delta(4z)^{-2}g_{b,r}(z),
\end{align} where for $b\in S$ and $r\in\mathbb Z$ we define  $g_{b,r}(z) := g_b\left(z;z+\tfrac{r}{4}\right).$ To study the functions $g_{b,r}(z)$ for any $r\in\mathbb Z$, and hence the functions $F_k(z;z+\tfrac{r}{4})$ by (\ref{eqn_Fgbj}), we claim that it suffices to assume $r\in\{0,1,2,3\}$.   Indeed,
each $f_k(z)$ has a Fourier expansion of the form \begin{align}\label{fkstarFE} \sum_{(-1)^s n \equiv 0,1 \pmod{4}} c_k(n) \ e(nz).\end{align}
Each $f_k^*(z)$  also has an expansion of the same form.   If $r$ and $r'$ are integers satisfying $r\equiv r' \pmod 4$, then for any $n\in \mathbb Z$, we have that $e\left(n\left(z+\frac{r}{4}\right)\right) = e\left(n\left(z+ \frac{r'}{4}\right)\right)$, and hence, by the definition of $g_{b,r}(z)$ we have that $g_{b,r}(z) = g_{b,r'}(z)$.

In what follows,  we consider the cases $r\in\{0,1,2,3\}$ separately.  We elaborate on the cases $r=0$ and $r=2$;  the cases $r=1$ and $r=3$ follow similarly.
Recall that elements in $M_k^!$ transform with character $\psi_k = \psi_{k,\gamma} := \rho^{2k}$ under $\gamma = \sm{a}{b}{c}{d} \in \Gamma_0(4)$, where $\rho=\rho_\gamma$ is defined in section \ref{sec2}.
   \medskip \ \\ {\emph{Case $r=0$.}}  By definition, we have that $g_{b,0}(z) \in M_{26}\left(\Gamma_0(4),\psi_{b+\frac12}\psi_{25-b+\frac12}\right)$.  However, $\psi_{b+\frac12}\psi_{25-b+\frac12} = \psi_{b+\frac12,\gamma}\psi_{25-b+\frac12,\gamma} =\left(\left(\frac{c}{d}\right)\varepsilon_d^{-1}\right)^{52}$ is the trivial character,   hence, $g_{b,0}(z) \in M_{26}(\Gamma_0(4))$, and thus, $F_k(z,z) \in M_{2}(\Gamma_0(4))$.  We also have that the function $\frac{d}{dz} j(4z) \in M_2(\Gamma_0(4))$.    The Sturm bound \cite{Sturm} for $M_2(\Gamma_0(4))$ is $[\textnormal{SL}_2(\mathbb Z) : \Gamma_0(4)] \cdot \frac{2}{12} = 1$, and thus, after directly checking Fourier expansions for each of the 12 possible choices of $b\in S$, we may conclude that $(-4\pi i)F_k(z,z) = \frac{d}{dz}j(4z)$, and hence, Proposition~\ref{prop_Akconstant} holds in the case $r=0$, for any $k \in \frac12 + \mathbb Z$.  Since $\kappa_{r,k} = \kappa_{r',k}$ by definition (and $g_{b,r}(z)=g_{b,r'}(z)$ as argued above) whenever $r\equiv r' \equiv 0 \pmod{4}$, we conclude that Proposition~\ref{prop_Akconstant} holds for $\tau = w_{M_r^{(1)}}(z)$ for any $r\equiv 0 \pmod{4}$ and any $k\in \frac12+ \mathbb Z$.  \medskip \ \\
{\emph{Case $r=2$.}}  Each $f_k(z)$ has a Fourier expansion as in (\ref{fkstarFE}); thus, we have that
\begin{align*} f_k\left(z+\tfrac{1}{2}\right)  = \!\!\!\!\sum_{(-1)^s n \equiv 0 \pmod{4}}\!\!\!\! c_k(n) e(nz) \ \ \   - \!\!\sum_{(-1)^s n \equiv 1 \pmod{4}} \!\!\!\!c_k(n) e(nz)
 = f_k(z) - 2 f_{k,\chi_2}(z),
\end{align*}
where \[f_{k,\chi_2}(z) := \sum_{(-1)^s n \equiv 0,1 \pmod{4}} c_k(n) e(nz) \chi_2(n) = \sum_{(-1)^s n \equiv 1 \pmod{4}} c_k(n) e(nz),\] and $\chi_2(n):= \frac12(1-(-1)^n)$ is the Dirichlet character $\pmod{2}$.
Since $f_k(z) \in M_k(\Gamma_0(4),\psi_k)$, it is a result from the classical theory of modular forms \cite{Koblitz} that  $f_{k,\chi_2}(z) \in M_k(\Gamma_0(16),\psi_k \chi_2^2) = M_k(\Gamma_0(16),\psi_k  )$. Thus, we deduce that $f_k\left(z+\frac12\right) \in M_k(\Gamma_0(16),\psi_k)$.  The above argument also applies to the functions $f_k^*(z)$, so we have that $f_k^*\left(z+\frac12\right) \in M_k(\Gamma_0(16),\psi_k)$ as well.
Using these facts, together with the definition of $g_{b,2}(z)$, we deduce that $g_{b,2}(z)$ is a form in the space $M_{26}(\Gamma_0(16),\psi_{b+\frac12}\psi_{25-b+\frac12}) =  M_{26}(\Gamma_0(16)).$   The Sturm bound for the subgroup $\Gamma_0(16)$ is $[\textnormal{SL}_2(\mathbb Z) : \Gamma_0(16)] \cdot \frac{2}{12} = 4;$ hence, the truth of Proposition~\ref{prop_Akconstant} in the case $r=2$ for any $k \in \frac12 + \mathbb Z$ follows after computing and comparing the Fourier expansions of each of the 12 possible functions $g_{b,2}(z)$ with the function $\frac{d}{dz} \ j(4z) $ as in the previous case.   Since $\kappa_{r,k} = \kappa_{r',k}$ by definition (and $g_{b,r}(z)=g_{b,r'}(z)$ as argued above) whenever $r\equiv r' \equiv 2 \pmod{4}$, we conclude that Proposition~\ref{prop_Akconstant} holds for $\tau = w_{M_r^{(1)}}(z)$ for any $r\equiv 2 \pmod{4}$ and any $k\in \frac12+ \mathbb Z$.   \medskip  \ \\ {\emph{Case $r=1$.}}  As above, we may re-write
\begin{align*} f_k\left(z+\tfrac{1}{4}\right)
 = \!\!\!\!\!\!\!\!\sum_{(-1)^s n \equiv 0 \pmod{4}}\!\! \!\!\!\!c_k(n) e(nz)    + i(-1)^s \!\!\!\!\!\!\!\!\!\!\!\sum_{(-1)^s n \equiv 1 \pmod{4}} \!\!\!\!\!\!c_k(n) e(nz)
= f_k(z) +(i(-1)^s -1) f_{k,\chi_2}(z).  \end{align*} The truth of Proposition~\ref{prop_Akconstant} in this case follows as in the previous case.  We point out that the term $(-1)^{k+\frac12}$ appearing in the definition of $\kappa_{r,k}$ for odd $r$ depends only on $b$.  That is, $(-1)^{k+\frac12} = (-1)^{b+1}$, so in this case (and in the case $r=3$ below), the 12 possible functions $g_{b,1}$ yield two different constants $\kappa_{1,k}$, depending on whether $b$ is even or odd.
\medskip  \ \\ {\emph{Case $r=3$.}}  As above, we may re-write
\begin{align*} f_k\left(z+\tfrac{3}{4}\right)
 = \!\!\!\!\!\!\sum_{(-1)^s n \equiv 0 \pmod{4}} \!\!\!\!\!\!c_k(n) e(nz) -i(-1)^s \!\!\!\!\!\!\!\!\!\!\!\sum_{(-1)^s n \equiv 1 \pmod{4}} \!\!\!\!\!\!c_k(n) e(nz)
 = f_k(z) -(i(-1)^s +1) f_{k,\chi_2}(z).
\end{align*} The truth of Proposition~\ref{prop_Akconstant} in this case follows as in the previous two cases.

\subsection{Proof of Proposition~\ref{prop_Akconstant} part 2:  $\tau = w_{M_r^{(d)}}(z), d\in\{2,3,4\}$} \label{sec_akpart2}
Our proof of the Proposition in this case makes uses of the Proposition in the case $d=1$, which we established in the previous section.  We divide our proof into three cases corresponding to whether  $d$ is equal to $2,3$ or $4$.
\medskip  \
\\ {\emph{Case $d=2$.}} We let $M=M_r^{(2)}.$
It is not difficult to see that we may rewrite
\[ w = w_{M_r^{(2)}}(z) = \begin{cases} \sm{1}{0}{4}{1}\left(z-\frac14\right), &  r=1, \\
\sm{1}{0}{4}{1}\left(u-\frac14\right), &  r=0, \\
\sm{-1}{0}{4}{-1}\left(z+\frac14\right), & r=-1, \\
\sm{-1}{0}{4}{-1}\left(u+\frac14\right), &  r=-2,
\end{cases} \]
where here and throughout this section, \[u =u(z) := \frac{z}{4z+1} = \sm{1}{0}{4}{1}z.\]  Using the fact that the functions $f_k$ and $f_k^*$ are in the space
$M_{k}\left(\Gamma_0(4),\psi_k\right)$,
 we find after a short calculation (using also that $z=\sm{1}{0}{-4}{1} u$) that
\begin{align}\label{Fk1}F_k(z;w) = \begin{cases} (4z)^{2-k} F_k\left(z;z- \frac{1}{4}\right), & r= 1, \\
(4u)^{2}(4z)^{-k} F_k\left(u;u - \frac{1}{4}\right), & r=0, \\
(4z)^{2-k} i^{2k} F_k\left(z;z+\frac{1}{4}\right), & r=-1, \\
(4u)^2(4z)^{-k} i^{2k} F_k\left(u;u +  \frac{1}{4}\right), &r=-2.
 \end{cases}
\end{align}   For ease of notation, we let  \[y=y_r(z) := \begin{cases} z, & r\in\{-1,1\}, \\ u, &r\in\{0,-2\}. \end{cases} \]
Using the modular properties of the function $j(\tau)$, and making a change of variable  ($d\tau = dw = (4y)^{-2} dy$), we see that
\begin{align}\label{j1}\frac{d}{d\tau} j(4\tau) = \frac{d}{d w} j(4 w) = (4y)^2 \frac{d}{dy} j(4y) = (4y)^2 \frac{d}{d(y\pm \frac{1}{4})} j(4(y\pm \tfrac{1}{4})),\end{align} where the sign above is taken to be $+$ if $r=-1,-2,$ and $-$ if $r=0,1$.
Combining (\ref{Fk1}) and (\ref{j1}), we find that
\[A_k(z;w) =  \begin{cases} (4z)^{-k} A_k\left(z;z- \frac{1}{4}\right), & r= 1, \\
(4z)^{-k}  A_k\left(u;u - \frac{1}{4}\right), & r=0, \\
(4z)^{-k} i^{2k} A_k\left(z;z+\frac{1}{4}\right), & r=-1, \\
 (4z)^{-k} i^{2k} A_k\left(u;u +  \frac{1}{4}\right), &r=-2.
 \end{cases}
\]
 By Proposition~\ref{prop_Akconstant} in the case $\tau = z + \frac{r}{4}, r\in\mathbb Z,$ proved in the previous section, we  have that $A_k\left(y;y\pm \frac14\right)$ is constant.  In particular, we conclude in this case that \begin{align}\label{eqn_Akt2}A_k(z;w_{M_r^{(2)}}(z) ) = (4z)^{-k}\cdot \begin{cases}\kappa_{-1,k}, &  r\in\{0,1\}, \\
 i^{2k} \kappa_{1,k}, & r\in\{-1,-2\}.
\end{cases}\end{align}
\medskip
\\ {\emph{Case $d=3$.}} We let $M=M_r^{(3)}.$
Proceeding as in the previous case, we rewrite
\[w=w_{M_r^{(3)}}(z) =\begin{cases} \sm{1}{0}{4}{1} z, & r=1, \\ \sm{-1}{-1}{4}{3} \left(z-\frac12\right), &  r=-1,\\
u \pm \frac{1}{4}, & r\in\{0,2\},\\
u - \frac{3}{4}, & r=-2.  \end{cases} \]
Again, using modular properties of $F_k(z;\tau)$,  we find that
\begin{align} \label{Fk2} F_k(z;w) = \begin{cases} (4z+1)^{2-k} F_k(z;z), & r=1, \\
(4z+1)^{2-k} i^{2k}F_k\left(z;z-\frac12\right), & r=-1,\\
(4z+1)^{-k} F_k\left(u;u\pm \frac14\right), & r\in\{0,2\},\\
(4z+1)^{-k} F_k\left(u;u-\frac34\right), & r=-2.\end{cases} \end{align}
We also compute in these cases that
\begin{align}\label{j2}
\frac{d}{dw}j(4w) = \begin{cases} (4z+1)^2 \frac{d}{dz} j(4z), & r = \pm 1, \\
\frac{d}{du} j(4 u), & r= 0,\pm 2.
\end{cases}  \end{align}Combining (\ref{Fk2}) and (\ref{j2}),  we have that
\[A_k(z;w) = (4z+1)^{-k} \begin{cases} A_k(z;z), & r=1, \\ i^{2k}A_k\left(z;z-\frac12\right), & r=-1, \\
A_k\left(u;u\pm \frac14\right), & r=2,0 \\
A_k\left(u;u-\frac34\right), & r=-2.\end{cases}
\]  Once again we combine Proposition~\ref{prop_Akconstant} in the case $\tau=z+\frac{r}{4}, r\in\mathbb Z,$ established in the previous section  with the above, and see that in all cases $A_k(z;\tau)$ is explicitly a  constant multiple of a rational function in $z$.   Precisely, we have that
\begin{align}\label{eqn_Akt3}A_k(z;w_{M_r^{(3)}}(z) ) = (4z+1)^{-k} \cdot \begin{cases} \kappa_{0,k}, & r=1, \\
i^{2k}\kappa_{-2,k} = 0, & r=-1, \\
\kappa_{1,k}, & r\in\{- 2,2\}, \\
\kappa_{-1,k}, & r =0. \end{cases}.\end{align}
\medskip
\\ {\emph{Case $d=4$.}} We let $M=M_r^{(4)}.$   Proceeding as above, we rewrite
\[w=\begin{cases} \sm{1}{0}{4}{1}\left(z+\frac14\right), & r=1, \\ \sm{-1}{0}{4}{-1}\left(z+\frac34\right), & r=-1, \\ \sm{1}{0}{-4}{1}\left(u-\frac14\right), & r=0, \\ \sm{1}{0}{4}{1}\left(u-\frac34\right), & r=2.
\end{cases} \]  Using modular properties of $F_k(z;\tau)$,  we find that \begin{align}\label{Fk3} F_k(z;w) = \begin{cases} (4z+2)^{2-k} F_k\left(z;z+\frac14\right), & r = 1,\\
(-4u+1)^{k}(4u-2)^{2-k} F_k\left(u;u-\frac34\right), &r=2, \\
(4z+2)^{2-k}i^{2k} F_k\left(z;z+\frac34\right), & r=-1, \\
(-4u+1)^{k}(-4u+2)^{2-k} F_k\left(u;u-\frac14\right), & r=0.
\end{cases}
\end{align}
We also compute  that
\begin{align}\label{j3} \frac{d}{dw}j(4w) = \begin{cases}
(4z+2)^2 \frac{d}{dz} j(4z), & r\in\{-1,1\},  \\
(4u-2)^2 \frac{d}{dz} j(4u), & r\in\{ 0,2\}.
\end{cases}
\end{align}
Combining (\ref{Fk3}) and (\ref{j3})  we find that
\[A_k(z;w) = (4z+2)^{-k} \begin{cases} A_k\left(z;z+\frac14\right), & r=1, \\
i^{2k}A_k\left(z;z+\frac34\right), & r=-1, \\
 A_k\left(u;u-\frac14\right), & r=0, \\
e^{-\pi i k} A_k\left(u;u-\frac34\right), & r=2.
\end{cases}
\]  Once again using Proposition~\ref{prop_Akconstant} in the case $\tau=z+\frac{r}{4}, r\in\mathbb Z,$ we see that in all cases $A_k(z;\tau)$ is explicitly a  constant multiple of a rational function in $z$, namely
\begin{align}\label{eqn_Akt4} A_k(z;w_{M_r^{(4)}}(z) ) = (4z+2)^{-k} \cdot \begin{cases} \kappa_{1,k}, & \text{if } r=1, \\
\kappa_{-1,k}, & \text{if } r=0,\\
i^{2k}\kappa_{-1,k}, & \text{if } r=-1, \\
e^{-\pi i k} \kappa_{1,k}, & \text{if } r=2.
 \end{cases}\end{align}

\section{Bounds} \label{secerrorbound}

In this section, we prove that the quantity \[\mathcal E_{m,k}(\theta) + e^{\frac{ik\theta}{2}} e^{-\frac{\pi m}{2} \sin(\theta)}\int_{-\frac12 + iv}^{\frac12 + iv} B_k(z;\tau) e(-m\tau) d\tau\] is bounded in absolute value by $2$, which we use in the proof of Theorem~\ref{mainthm}.  Recall that $\mathcal E_{m,k}(\theta)$ is equal to $\mathcal C_{m,k}(\theta)$ if $\theta \in \left(\frac{\pi}{3},\frac{5\pi}{12}\right]$, is equal to $\mathcal D_{m,k}(\theta)$ if $\theta \in \left[\frac{7\pi}{12}, \frac{2\pi}{3}\right)$, and is equal to zero otherwise, and that the functions
\begin{align*} \mathcal C_{m,k}(\theta) & = - c_{m,k}(2i)^{-k}\left(\sin(\tfrac{\theta}{2})\right)^{-k} e^{-\frac{\pi i m}{4}}
e^{\frac{\pi m}{2} \left(\frac{1}{2\tan\left(\frac{\theta}{2}\right)} - \sin(\theta)\right)}, \\
\mathcal D_{m,k}(\theta) & = -  d_{m,k}\cdot 2^{-k} \left(\cos(\tfrac{\theta}{2})\right)^{-k}   e^{\frac{\pi i m}{4}} e^{\frac{\pi m}{2}\left( \frac{\tan\left(\frac{\theta}{2}\right)}{2} -\sin(\theta)\right) } \end{align*}
were defined in equations~(\ref{def_Cmk}) and~(\ref{def_Dmk}).
We begin with the following lemma.

\begin{lemma}\label{cdboundlem}  Let $k = 12a + b + \frac{1}{2}$ as above.
If $m\geq 4.8|a|$, then the following are true.  \medskip \\ i) If $\theta \in \left(\frac{\pi}{3},\frac{5\pi}{12}\right]$, then $ |\mathcal C_{m,k}(\theta)|   < \sqrt{2}.$  \medskip \\ ii) If $\mathcal C_{m,k}(\theta)$, then $|\mathcal D_{m,k}(\theta) |  < \sqrt{2}.$
\end{lemma}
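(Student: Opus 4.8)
The plan is to reduce each inequality to a single real-variable estimate by taking absolute values, to dispatch the case $k>0$ for free, and to use the hypothesis $m\ge 4.8\abs{a}$ only when $k\le 0$. (I read part (ii) as the assertion that $\abs{\mathcal D_{m,k}(\theta)}<\sqrt2$ for $\theta\in[\frac{7\pi}{12},\frac{2\pi}{3})$.) First I would take absolute values in~(\ref{def_Cmk}). From~(\ref{def_cmkdef}) the constant $c_{m,k}$ is either $0$ or one of $1\pm i$, so $\abs{c_{m,k}}\in\{0,\sqrt2\}$; if $c_{m,k}=0$ then $\mathcal C_{m,k}\equiv 0$ and (i) is immediate. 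Assuming $c_{m,k}\ne 0$ and using $\abs{(2i)^{-k}}=2^{-k}$, $\abs{e^{-\pi i m/4}}=1$, and the positivity of $\sin(\tfrac\theta2)$ on $(0,\pi)$, I obtain
\[\abs{\mathcal C_{m,k}(\theta)} = \sqrt2\,(2\sin(\tfrac\theta2))^{-k}\,e^{-\frac{\pi m}{2}P(\theta)},\qquad P(\theta):=\sin\theta-\tfrac{1}{2\tan(\theta/2)}.\]
Thus (i) is equivalent to $(2\sin(\tfrac\theta2))^{-k}e^{-\frac{\pi m}{2}P(\theta)}<1$, i.e., after taking logarithms, to the inequality $\tfrac{\pi m}{2}P(\theta) > -k\,L(\theta)$, where $L(\theta):=\log(2\sin(\tfrac\theta2))$.

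Next I would record the relevant signs on $(\frac\pi3,\frac{5\pi}{12}]$. Since $2\sin(\tfrac\theta2)>1$ there, $L(\theta)>0$; and the identity $P(\theta)=\cos(\tfrac\theta2)\tfrac{1-2\cos\theta}{2\sin(\theta/2)}$ together with $\cos\theta<\tfrac12$ gives $P(\theta)>0$. In particular, whenever $k>0$ --- which holds for every $a\ge 0$, since $k=12a+b+\tfrac12\ge \tfrac{13}{2}$ --- the right-hand side $-kL(\theta)$ is negative while the left-hand side is positive, so (i) holds with no constraint on $m$. This is the source of the remark that one may take $A=0$ when $a\ge 0$.

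It remains to treat $k\le 0$, which forces $a\le -1$; here $-k=\abs{k}=12\abs{a}-(b+\tfrac12)<12\abs{a}$. Using $m\ge 4.8\abs{a}$ I would bound $\tfrac{\pi m}{2}P(\theta)\ge 2.4\pi\abs{a}\,P(\theta)$, so that the desired inequality reduces to the single-variable trigonometric estimate
\[5\,L(\theta)\le \pi\,P(\theta)\qquad\text{for }\ \theta\in(\tfrac\pi3,\tfrac{5\pi}{12}],\]
after which the chain $\abs{k}L(\theta)<12\abs{a}\,L(\theta)\le 2.4\pi\abs{a}\,P(\theta)\le\tfrac{\pi m}{2}P(\theta)$ closes the argument. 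I expect this estimate to be the main obstacle: both $L$ and $P$ vanish at $\theta=\tfrac\pi3$ (with ratio tending to $\tfrac{1}{\sqrt3}<\tfrac\pi5$), and the inequality is nearly tight at the right endpoint, where $\pi P-5L\approx 0.003>0$. I would therefore set $D(\theta):=\pi P(\theta)-5L(\theta)$, verify $D(\tfrac\pi3)=0$ and $D'(\tfrac\pi3)>0$, and confirm $D\ge 0$ across the short interval by showing $D'$ changes sign at most once (so that $D$ is unimodal and its minimum on the closed interval is attained at an endpoint, both of which are nonnegative); a fully rigorous alternative is to partition $(\tfrac\pi3,\tfrac{5\pi}{12}]$ and bound $L$ above and $P$ below on each piece using monotonicity, with a partition fine enough to absorb the $\approx 0.003$ margin.

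Finally, part (ii) follows from part (i) by the substitution $\theta\mapsto\pi-\theta$. One checks $\abs{d_{m,k}}\in\{0,\sqrt2\}$ exactly as for $c_{m,k}$, and that $\abs{\mathcal D_{m,k}(\theta)}=\sqrt2\,(2\cos(\tfrac\theta2))^{-k}e^{-\frac{\pi m}{2}Q(\theta)}$ with $Q(\theta):=\sin\theta-\tfrac12\tan(\tfrac\theta2)$. Since $\cos(\tfrac{\pi-\theta}{2})=\sin(\tfrac\theta2)$ and $\tan(\tfrac{\pi-\theta}{2})=\cot(\tfrac\theta2)$, we have $\log(2\cos(\tfrac\theta2))=L(\pi-\theta)$ and $Q(\theta)=P(\pi-\theta)$; as $\theta$ ranges over $[\frac{7\pi}{12},\frac{2\pi}{3})$ the point $\pi-\theta$ ranges over $(\frac\pi3,\frac{5\pi}{12}]$, so the reduced inequality for (ii) is literally the reduced inequality for (i) evaluated at $\pi-\theta$, and no further computation is needed.
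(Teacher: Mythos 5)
Your proposal is correct and follows essentially the same route as the paper: after pulling out $|c_{m,k}|=|d_{m,k}|=\sqrt{2}$, both arguments bound the $\bigl(b+\tfrac{1}{2}\bigr)$- and $\beta$-contributions trivially (using $L,P\geq 0$ on the relevant arcs) and reduce the remaining $a$-dependent part, in the negative case, to the single inequality $12\log\bigl(2\sin(\tfrac{\theta}{2})\bigr)\leq 2.4\pi\bigl(\sin\theta-\tfrac{1}{2}\cot(\tfrac{\theta}{2})\bigr)$ on $\bigl(\tfrac{\pi}{3},\tfrac{5\pi}{12}\bigr]$, which is exactly your $5L\leq \pi P$ and which the paper asserts via numerical verification rather than proving in more detail than you do. Your two variations --- splitting on the sign of $k$ rather than of $a$, and deducing part (ii) from part (i) through the substitution $\theta\mapsto\pi-\theta$ instead of running the parallel estimates --- are harmless reorganizations of the same argument.
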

\begin{proof}
We first observe by definition that $|c_{m,k}e^{-\frac{\pi i m}{4}}| \leq \sqrt{2}$ and $|d_{m,k}e^{\frac{\pi i m}{4}}| \leq \sqrt{2}$.  We rewrite $m=\alpha|a| + \beta,$ where $\beta\geq 0$ and $\alpha \geq 4.8$, and $k=12a+b+\frac12$, where $a\in \mathbb Z$ and $b\in S$.  Let $\theta_1 \in \left(\frac{\pi}{3},\frac{5\pi}{12}\right]$ and $\theta_2 \in \left[\frac{7\pi}{12},\frac{2\pi}{3}\right)$.  For ease of notation, we define
\begin{align*} F(g,n;\theta)  := \left(2g\left(\tfrac{\theta}{2}\right)\right)^{-n}, \ \
G(g,n;\theta)  := e^{\frac{n\pi}{2}\left(\frac{1}{2}g\left(\frac{\theta}{2}\right) - \sin(\theta)\right)}.
\end{align*}

Then we have that \begin{align*} |\mathcal C_{m,k}(\theta_1)| & \leq \sqrt{2}  \left( F\left(\sin,b+\tfrac12;\theta_1\right) G\left(\frac{1}{\tan}, \beta;\theta_1\right)\right) \left(F(\sin,12a;\theta_1) G\left(\frac{1}{\tan}, \alpha|a|;\theta_1\right) \right),\\
|\mathcal D_{m,k}(\theta_2)| & \leq \sqrt{2}  \left( F\left(\cos,b+\tfrac12;\theta_2\right) G\left(\tan, \beta;\theta_2\right)\right) \left(F(\cos,12a;\theta_2) G\left(\tan, \alpha|a|;\theta_2\right) \right).
 \end{align*}
 The functions $\frac12\sin(\frac{\theta_1}{2})$ and $\cos(\frac{\theta_2}{2})$ are bounded between $1$ and $1.21752$, and the functions
$ \frac{1}{2\tan\left(\frac{\theta_1}{2}\right)} - \sin(\theta_1)$ and $\frac12\tan(\tfrac{\theta_2}{2}) - \sin(\theta_2)$ are bounded between $-.314313$ and $0$,
 when $\theta_1$ and $\theta_2$ are in the restricted domains given above.    Thus, since $\beta\geq 0$ and $\frac{13}{2}<b+\frac12 < \frac{39}{2}$, we have that
 \[ 0<F(\sin,b+\tfrac12;\theta_1) G\left(\frac{1}{\tan}, \beta;\theta_1\right)<1, \ \  0<F(\cos,b+\tfrac12;\theta_2) G\left(\tan, \beta;\theta_2\right) < 1,\] and hence \begin{align} |\mathcal C_{m,k}(\theta_1)| & < \sqrt{2}   F(\sin,12a;\theta_1) G\left(\frac{1}{\tan}, \alpha|a|;\theta_1\right), \label{cbd}\\
|\mathcal D_{m,k}(\theta_2)| & <\sqrt{2}   F(\cos,12a;\theta_2) G\left(\tan, \alpha|a|;\theta_2\right) \label{dbd}.
 \end{align}
 We consider two cases, $a\geq 0$ and $a<0$, and begin with the former.  By an identical argument given above, noting that $\alpha> 0$, we again have for $a\geq 0$ that \[0<F(\sin,12a;\theta_1) G\left(\frac{1}{\tan}, \alpha|a|;\theta_1\right)\leq 1, \ \ 0 < F(\cos,12a;\theta_2) G\left(\tan, \alpha|a|;\theta_2\right) \leq 1,\] and hence, using this together with (\ref{cbd}) and (\ref{dbd}), for $a\geq 0$, Lemma~\ref{cdboundlem} is proved.  In the second case, where $a<0$, we rewrite $-12a=12|a|$, so that
\begin{align}\label{aneg}F(g,12a;\theta) G\left(h, \alpha|a|;\theta\right) = \left(\left(2g\left(\tfrac{\theta}{2}\right)\right)^{12} e^{\frac{\alpha\pi}{2}\left(\frac{1}{2}h(\frac{\theta}{2}) - \sin(\theta)\right)}\right)^{|a|}.\end{align}  If $\alpha\geq 4.8$, then the functions
 \begin{align*} & \left(2\sin\left(\tfrac{\theta_1}{2}\right)\right)^{12} e^{\frac{\alpha\pi}{2}\left(\frac{1}{2\tan\left(\frac{\theta_1}{2}\right)} - \sin(\theta_1)\right)},  \\
 & \left(2\cos\left(\tfrac{\theta_2}{2}\right)\right)^{12} e^{\frac{\alpha\pi}{2}\left(\frac{\tan\left(\frac{\theta_2}{2}\right)}{2} - \sin(\theta_2)\right)},\end{align*} are bounded between $0$ and $1$ when $\theta_1$ and $\theta_2$ are restricted to the domains given above.  Thus, since $|a|>0$, using this with (\ref{cbd}),  (\ref{dbd}), and (\ref{aneg}), Lemma~\ref{cdboundlem} is also proved in the second case, $a<0$.
\end{proof}

We next bound the integral
\[\int_{-\frac{1}{2} + iv}^{\frac{1}{2} + i v} \frac{f_k(z)f_{2-k}^*(\tau) + f_k^*(z)f_{2-k}(\tau)}{j(4\tau)-j(4z)}e^{-2\pi i m \tau} d\tau,\] where $\tau = u+iv$ and $v$ is fixed.  We rewrite the integrand as
\[ \left(\frac{\Delta(4z)}{\Delta(4\tau)}\right)^a \frac{f_b(z)f_{2-b}^*(\tau) + f_b^*(z)f_{2-b}(\tau)}{j(4\tau)-j(4z)}e^{-2\pi i m \tau}.\]
To bound this, we replace the integral with the absolute value of the integrand, since the contour has length 1.  Since $a$ can be either positive or negative, we need upper and lower bounds on $\Delta$ for the appropriate values of $\tau$ and $z$.  Additionally, we need a lower bound on the absolute value of the denominator $j(4\tau) - j(4z)$.  Finally, we need upper bounds on the basis elements $f_b$ and $f_b^*$, which can be obtained (in the absence of explicit bounds on their coefficients)  by writing them in terms of the Eisenstein series $F(\tau)$ and the theta function $\vartheta(\tau)$, as detailed in the appendix of~\cite{DJ2}.

If $\frac{5\pi}{12} \leq \theta \leq \frac{7\pi}{12}$, we write $z_1 = -\frac{1}{4} + \frac{1}{4} e^{i\theta}$, and let $v_1 = 0.2125$.
If $\frac{\pi}{3} < \theta < \frac{5\pi}{12}$ or $\frac{7\pi}{12} < \theta < \frac{2\pi}{3}$, we instead write $z_2 = -\frac{1}{4} + \frac{1}{4} e^{i\theta}$ and let $v_2 = 0.1375$.  We have computed the following bounds using MAPLE for these values of $\tau$ and $z$.  Generally, this requires explicitly bounding the tail of the Fourier expansion using crude bounds on the Fourier coefficients, and then computing bounds on the initial terms on the appropriate interval.  In some cases, these bounds on the initial terms require bounding the derivative and checking values on a grid of points, similar to the computations in~\cite{GJ}.

For the theta function $\vartheta(\tau) = 1 + 2q + 2q^4 + \ldots$, we have
\[\abs{\vartheta(\tau_1)} \leq 1.53583,\]
\[\abs{\vartheta(\tau_2)} \leq 1.90697,\]
\[\abs{\vartheta(z_1)} \leq 1.44325,\]
\[\abs{\vartheta(z_2)} \leq 1.52182.\]
For the delta function $\Delta(\tau) = q - 24q + \ldots$, we have
\[0.00407 \leq \abs{\Delta(4\tau_1)} \leq 0.00551,\]
\[0.01 \leq \abs{\Delta(4\tau_2)} \leq 0.11054,\]
\[0.0015 \leq \abs{\Delta(4z_1)} \leq 0.00246,\]
\[0.0015 \leq \abs{\Delta(4z_2)} \leq 0.00491.\]
For the weight 2, level 4 Eisenstein series $F(\tau)$, we have
\[\abs{F(\tau_1)} \leq 0.34440,\]
\[\abs{F(\tau_2)} \leq 0.82688,\]
\[\abs{F(z_1)} \leq 0.26477,\]
\[\abs{F(z_2)} \leq 0.33151.\]
For the $j$-function, we know that $j(4z)$ is real, with $582.84 \leq j(4z_1) \leq 1728$ and $0 \leq j(4z_2) \leq 582.9$, and we have
\[\abs{j(4\tau_1) - j(4z_1)} \geq 132,\]
\[\abs{j(4\tau_2) - j(4z_2)} \geq 1200.\]

Suppose that $k = 12a + 6 +\frac{1}{2}$ for some integer $a$.  The integral is bounded above by
\begin{align}\label{eqn_intbd1} \left|\frac{\Delta(4z)}{\Delta(4\tau)}\right|^a \frac{\abs{f_{\frac{13}{2}}(z)f_{\frac{-9}{2}}^*(\tau) + f_{\frac{13}{2}}^*(z)f_{\frac{-9}{2}}(\tau)}}{\abs{j(4\tau)-j(4z)}} e^{2\pi m v}.\end{align}  Note that
\[f_{\frac{-9}{2}}(\tau) = \frac{f_{\frac{39}{2}}(\tau)}{\Delta(4\tau)^2}, \ \ \textnormal{and } \ \
f_{\frac{-9}{2}}^*(\tau) = \frac{f_{\frac{39}{2}}^*(\tau)}{\Delta(4\tau)^2}.\]  Thus, (\ref{eqn_intbd1}) can be rewritten as
\[ \left|\frac{\Delta(4z)}{\Delta(4\tau)}\right|^a \frac{\abs{f_{\frac{13}{2}}(z)f_{\frac{39}{2}}^*(\tau) + f_{\frac{13}{2}}^*(z)f_{\frac{39}{2}}(\tau)}}{\abs{\Delta(4\tau)^2}\abs{j(4\tau)-j(4z)}} e^{2\pi m v}.\]

For $z_1$ and $\tau_1$, we have
\[\abs{f_{\frac{13}{2}}(z_1)} \leq 15.95180790,\]
\[\abs{f_{\frac{13}{2}}^*(z_1)} \leq 373.3811270,\]
\[\abs{f_{\frac{39}{2}}(\tau_1)} \leq 2070.877536,\]
\[\abs{f_{\frac{39}{2}}^*(\tau_1)} \leq 48384.64244.\]
This gives a bound on the integral of
\[(0.60443)^a \cdot 706609609 \cdot e^{\frac{.85\pi m}{2}}\] if $a \geq 0$, and
\[(3.6734)^{-a} \cdot 706609609 \cdot e^{\frac{.85\pi m}{2}}\] if $a < 0$.
Thus, we have that
\[e^{\frac{ik\theta}{2}} e^{-\frac{\pi m}{2}\sin(\theta)} f_{k, m}(z) - 2\cos\left(\frac{k\theta}{2} + \frac{\pi m}{2} - \frac{\pi m \cos(\theta)}{2}\right) \leq e^{-\frac{\pi m}{2} (\sin(\theta) - .85)} (0.60443)^a \cdot 706609609\]
if $a \geq 0$, and
\[e^{\frac{ik\theta}{2}} e^{-\frac{\pi m}{2}\sin(\theta)} f_{k, m}(z) - 2\cos\left(\frac{k\theta}{2} + \frac{\pi m}{2} - \frac{\pi m \cos(\theta)}{2}\right) \leq e^{-\frac{\pi m}{2} (\sin(\theta) - .85)} (3.6734)^{-a} \cdot 706609609\]
if $a < 0$.  The  two quantities on the right above can  be rewritten as
\[ \leq (0.83353)^m (0.60443)^a \cdot 706609609,\]
\[ \leq (0.83353)^m (3.6734)^{-a} \cdot 706609609.\]
We have $(0.83353)^m \cdot 706609609 \leq 2$ for $m \geq 109$, so the quantity for $a \geq 0$ is less than $2$ for $m \geq 109$.
If $a < 0$, we have $(0.83353)^m \cdot 3.6734 \leq 1$ for $m \geq 8$, so the quantity is less than $2$ for $m \geq 8\abs{a}+109$.

For $z_2$ and $\tau_2$, we have
\[\abs{f_{\frac{13}{2}}(z_2)} \leq 32.43415,\]
\[\abs{f_{\frac{13}{2}}^*(z_2)} \leq 752.09673,\]
\[\abs{f_{\frac{39}{2}}(\tau_2)} \leq 10147561.12,\]
\[\abs{f_{\frac{39}{2}}^*(\tau_2)} \leq 235391937.7.\]
This gives a bound on the integral of
\[ (0.491)^a \cdot 127222365876 \cdot e^{\frac{.55\pi m}{2}}\] if $a \geq 0$, and
\[ (73.6934)^{-a} \cdot 127222365876 \cdot e^{\frac{.55\pi m}{2}}\] if $a < 0$.

Using the bounds on $\mathcal{C}_{m, k}(\theta_2)$ and $\mathcal{D}_{m, k}(\theta_2)$ from Lemma \ref{cdboundlem} above, we find that the difference between the weighted modular form and the cosine function is bounded above by
\[ \sqrt{2} + e^{-\frac{\pi m}{2} (\sin(\theta) - .55)} (0.491)^a \cdot 127222365876\] if $a \geq 0$ and, if $a < 0$, by
\[ \sqrt{2} + e^{-\frac{\pi m}{2} (\sin(\theta) - .55)} (73.6934)^{-a} \cdot 127222365876.\]  These simplify to
\[ \leq \sqrt{2} + (0.60872)^m (0.491)^a \cdot 127222365876,\]
\[ \leq \sqrt{2} + (0.60872)^m (73.6934)^{-a} \cdot 127222365876,\]
giving that for $a \geq 0$, the quantity is less than $2$ if $m \geq 53$, and for $a < 0$, it is less than $2$ if $m \geq 9|a| + 53$.

For the other eleven values of $b$, similar computations show that the bounds on \[\abs{f_{b}(z)f_{2-b}^*(\tau) + f_{b}^*(z)f_{2-b}(\tau)}\] become smaller.  Specifically, for each value of $b$, we find that the numbers 706609609 and 127222365876 in cases $1$ and $2$ may be replaced by the values in Table~\ref{boundtable}.
Thus, in all cases the quantity \[\mathcal E_{m,k}(\theta) + e^{\frac{ik\theta}{2}} e^{-\frac{\pi m}{2} \sin(\theta)}\int_{-\frac12 + iv}^{\frac12 + iv} B_k(z;\tau) e(-m\tau) d\tau\] is bounded in absolute value by $2$.  \begin{table}[!ht]
  \centering
  \caption{Upper bounds for each $b$}\label{boundtable}
    \begin{tabular}{|r|r|r|}
    \hline
    $b$ & For $z_1, \tau_1$ & For $z_2, \tau_2$ \\
    \hline
    6 & 706609608 & 127222365875 \\
    8 & 554055912 & 51930014336 \\
    9 & 478100088 & 32392878212 \\
    10 & 427574714 & 20854624833 \\
    11 & 408921890 & 14417163525 \\
    12 & 325238946 & 8284899739 \\
    13 & 288599577 & 5296681421 \\
    14 & 273853210 & 3640432156 \\
    15 & 220558615 & 2114574952 \\
    16 & 196218970 & 1353920641 \\
    17 & 172470466 & 860720673 \\
    19 & 132750791 & 344722508 \\
    \hline
    \end{tabular}
\end{table}

\bibliographystyle{amsplain}

\end{document}